\def\tbcaption{\def\@captype{table}\caption}
\newtheorem{theorem}{Theorem}
\newtheorem{lemma}[theorem]{Lemma}
\theoremstyle{definition}
\newtheorem{definition}{Definition}
\newtheorem{proposition}[theorem]{Proposition}
\newtheorem*{lemma*}{Lemma}
\theoremstyle{remark}
\newcommand{\beq}[1]{\begin{equation}\label{#1}}
\newcommand{\eeq}{\end{equation}}
\newcommand{\abs}[1]{\left\vert#1\right\vert}
\begin{document}

\title[Thabit and Williams Numbers as a Sum of Two Repdigits]{Thabit and Williams Numbers Base $b$ as a Sum or Difference of Two $g$-Repdigits}

\author[K. N. Ad\'edji]{Kou\`essi Norbert Ad\'edji}
\address{Institut de Math\'ematiques et de Sciences Physiques, Universit\'e D'A\-bo\-mey-\-Ca\-la\-vi, Bénin}
\email{adedjnorb1988@gmail.com}

\author[M. Bliznac Trebje\v{s}anin]{Marija {Bliznac Trebje\v{s}anin}$^1$}
\thanks{$^1$ Corresponding author.}
\address{University of Split, Faculty of Science, Ru\dj{}era Bo\v{s}kovi\'{c}a 33,
21000 Split, Croatia}
\email{marbli@pmfst.hr}

\author[J. Ple\v{s}tina]{Jelena {Ple\v{s}tina}}
\address{University of Split, Faculty of Science, Ru\dj{}era Bo\v{s}kovi\'{c}a 33,
21000 Split, Croatia}
\email{jplestina@pmfst.hr}

\begin{abstract}
We investigate cases where Thabit and Williams numbers in base $b$ can be expressed as the sum or difference of two $g$-repdigits. For specific values of $b$ and $g$, we describe parametric solutions yielding infinitely many solutions for some equations and establish upper bounds for the parameters of the remaining finitely many solutions. As an illustration, we also provide a complete solution for some equations.
\end{abstract}

\maketitle 

\noindent{\it 2020 {Mathematics Subject Classification:}} 11D61, 11J68, 11J86

\noindent{\it Keywords}: repdigit, linear form in logarithms, Thabit number, reduction method

\section{Introduction}

A Thabit number, or a Th\^abit ibn Qurra number, or $321$
number is an integer of the form $3\cdot 2^n-1$ for a non-negative integer $n$. The sequence of Thabit numbers begins with
$$2, 5, 11, 23, 47, 95, 191, 383, 767, 1535, 3071, 6143,\dots$$
We can generalize this concept in multiple ways. Let $b\geq 2$ be an integer. A Thabit number base $b$ is a number of the form
$(b+1)b^n-1$ for a non-negative integer $n$. A second kind Thabit number base $b$ is a number of the form $(b + 1)b^n + 1$ for a non-negative integer $n$. Another natural generalization are the Williams numbers.
For an integer $b\geq 2$, a Williams number base $b$ is a number of the form $(b-1)b^n\pm 1$ for a non-negative integer $n$. The question of which Thabit or Williams number is prime is a widely studied problem. In this paper, we will consider another property of Thabit and Williams numbers.

\begin{definition}\label{def:repdigit} Let $g\geq 2$ be an integer. A positive integer $N$ is called a repdigit in base $g$ or simply a $g$-repdigit if all of the digits in its base $g$ expansion are equal.

If $N$ is a $g$-repdigit then it has the form
$$N=a\left(\frac{g^m-1}{g-1}\right),\quad \textrm{for }m\geq1,\ a\in\{1,2,\dots,g-1\}.$$
Taking $g=10$, a positive integer $N$ is simply called a repdigit.
\end{definition}
 The aim of this work is to prove when a Thabit number base $b$ or a Williams number  base $b$ is a sum or a difference of two $g$-repdigits. More precisely, let $b\geq 2$ and $g\geq 2$ be integers. We are searching for solutions $(d_1,d_2,l,m,n)$ to each Diophantine equation
\begin{equation}\label{eq:first_sum}
    (b\pm1)b^n\pm 1=d_1\left(\frac{g^l-1}{g-1}\right)+d_2\left(\frac{g^m-1}{g-1}\right),
\end{equation}
and 
\begin{equation}\label{eq:second_diff}
    (b\pm1)b^n\pm 1=d_1\left(\frac{g^l-1}{g-1}\right)-d_2\left(\frac{g^m-1}{g-1}\right),
\end{equation}
where $1\leq d_1,d_2\leq g-1$, $n\geq0$ and $l,m\geq1$ are integers. 

\medskip

For some choices $b$ and $g$, some of the equations from \eqref{eq:first_sum} have infinitely many solutions $(d_1,d_2,l,m,n)$. The statement of this proposition is easily verified.
\begin{proposition}\label{prop:infinitely_sum}
\mbox{}
\begin{enumerate}[a)]
    \item
    Let $b=2$ and $g=2^k$, for an integer $k\geq 2$. Then $(d_1,d_2,l,m,n)=(2,g-1,1,m,mk)$, $m\in\mathbb{N}$, represent infinitely many solutions to the equation 
      \begin{equation}\label{eq:sum_mp}
      (b-1)b^n + 1=d_1\left(\frac{g^l-1}{g-1}\right)+d_2\left(\frac{g^m-1}{g-1}\right).
  \end{equation}
\item Let $b=2$ and $g=2^k$, for an integer $k\geq 2$.  Then $(d_1,d_2,l,m,n)=(d_1,g-1-d_1,m,m,mk)$, $d_1\in\{1,2,\dots,g-2\}$, $m\in\mathbb{N}$, represent infinitely many solutions to the equation 
      \begin{equation}\label{eq:sum_mm}
      (b-1)b^n - 1=d_1\left(\frac{g^l-1}{g-1}\right)+d_2\left(\frac{g^m-1}{g-1}\right).
  \end{equation}

    \item Let $b = a^\beta$ and $g = a^\alpha$, where $a, \alpha, \beta$ are positive integers such that $a \geq 2$ and either $\alpha > \beta$ with $\gcd(\alpha, \beta) = 1$, or $\alpha = \beta$ and $b \neq 2$. 
    For each integer $k$ satisfying $0 \leq k \leq \alpha - \beta$ (where $k > 0$ if $b = 2$),  the quintuple $(d_1, d_2, l, m, n) = (a^\alpha - a^k(a^\beta - 1), a^k(a^\beta - 1) - 1, l, l+1, n)$ represents infinitely many solutions to the equation \eqref{eq:sum_mm}, where $n \geq l \geq 1$ are the solutions to the linear Diophantine equation $\beta n - \alpha l = k$.

    \item Let $b=a^\beta$ and $g=a^\alpha$, where $a, \alpha, \beta$ are positive integers such that $a \geq 2$ and $\alpha > \beta$ with $\gcd(\alpha, \beta) = 1$. 
    For each integer $k$ satisfying $0 \leq k < \alpha - \beta$, the quintuple $(d_1, d_2, l, m, n) = (a^{\alpha} - a^k(a^{\beta} + 1), a^k(a^{\beta} + 1) - 1, l, l+1, n)$ represents infinitely many solutions to the equation:
    \begin{equation}\label{eq:sum_pm}
        (b + 1)b^n - 1 = d_1 \left( \frac{g^l - 1}{g - 1} \right) + d_2 \left( \frac{g^m - 1}{g - 1} \right),
    \end{equation}
    where $n > l \geq 1$ are solutions to the linear Diophantine equation $\beta n - \alpha l = k$.

\item Let $b=2^{k}$ and $g=2^{k+1}$, for an integer $k\geq 1$.  
Then $(d_1,d_2,l,m,n)=(g-2,1,k(t+1)-1,k(t+1)+1,(k+1)t+k)$, $t\in\mathbb{N}$, represent infinitely many solutions to the equation \eqref{eq:sum_pm}.
 
\end{enumerate}
\end{proposition}

For $b=2$, one of the equations \eqref{eq:second_diff}, also has infinitely many solutions.

\begin{proposition}\label{prop:infinitely_diff}
     Let $b=2$ and $g\geq 2$ be any integer. Then $(d_1,d_2,l,m,n)=(d_1,d_1,l,l,0)$, for $l\in\mathbb{N}$, $1\leq d_1\leq g-1$ represents infinitely many solutions of the equation 
\begin{equation}\label{eq:diff_mm}
      (b-1)b^n - 1=d_1\left(\frac{g^l-1}{g-1}\right)-d_2\left(\frac{g^m-1}{g-1}\right).
  \end{equation}
\end{proposition}

 The main results about \eqref{eq:first_sum} and \eqref{eq:second_diff} are given in the following theorems.
\begin{theorem}\label{tm:first}
Let $b, g \geq 2$ be integers. Then, except for the cases described in Proposition \ref{prop:infinitely_sum}, the Diophantine equations \eqref{eq:first_sum} have finitely many solutions in integers $1 \leq d_1, d_2 \leq g-1, \, n \geq 0$ and $ l,m \geq 1$. Moreover, 
$$l, m < 2.71 \cdot 10^{29} \log^2{g} \log^3{b} \log^2{ \left( \max{\{ b,g \}} \right) } $$
and
 $$n < 2.08 \cdot 10^{29} \log^3{g} \log^2{b} \log^2{ \left( \max{\{ b,g \}} \right) } .$$
\end{theorem}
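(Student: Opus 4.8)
The plan is to reduce each equation in \eqref{eq:first_sum} to two linear forms in logarithms and apply a Matveev-type lower bound, combined with elementary size estimates. Since $d_1$ and $d_2$ play symmetric roles, I would first assume without loss of generality that $l\geq m$ and fix one choice of signs, the remaining sign combinations being treated identically. Clearing denominators, a sum equation becomes
\begin{equation*}
(g-1)(b\pm1)b^n \pm (g-1) = d_1 g^l + d_2 g^m - (d_1+d_2).
\end{equation*}
Comparing the sizes of both sides shows that the dominant balance is $(g-1)(b\pm1)b^n \asymp d_1 g^l$, which forces $n$, $l$, and $m$ to be mutually comparable, with $l\log g = n\log b + O(\log(\max\{b,g\}))$. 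I would record these explicit inequalities first, since the final step needs $\max\{n,l,m\}$ controlled by a single quantity, and since converting between $n$ and $l$ via $n\log b\approx l\log g$ is exactly what produces the slightly different exponents of $\log g$ and $\log b$ in the two stated bounds.

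Next I would isolate the two largest terms. Subtracting $d_1 g^l$ and dividing, one gets
\begin{equation*}
\left| \frac{(g-1)(b\pm1)}{d_1}\, b^n g^{-l} - 1 \right| < C_1\, g^{m+1-l}
\end{equation*}
for an explicit small constant $C_1$, where the left-hand side is $|e^{\Lambda_1}-1|$ with
\begin{equation*}
\Lambda_1 = n\log b - l\log g + \log\frac{(g-1)(b\pm1)}{d_1}.
\end{equation*}
Provided $\Lambda_1\neq 0$, Matveev's theorem applied over $\mathbb{Q}$ (so the degree is $D=1$, which simplifies the constants) gives a lower bound $\log|\Lambda_1| > -\kappa_1 \log b\,\log g\,\log(\max\{b,g\})\,\log B$, where $B\asymp\max\{n,l,m\}$. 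Comparing this with the upper bound $\log|\Lambda_1| < (m+1-l)\log g + O(1)$ bounds $l-m$ in terms of $\log b$, $\log g$ and $\log B$.

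With $l-m$ now controlled, I would form a second linear form by grouping the two powers of $g$: since $d_1 g^l + d_2 g^m = g^m(d_1 g^{l-m}+d_2)$, clearing and dividing yields
\begin{equation*}
\left| \frac{(g-1)(b\pm1)}{d_1 g^{l-m}+d_2}\, b^n g^{-m} - 1 \right| < C_2\, g^{1-m},
\end{equation*}
that is, $|e^{\Lambda_2}-1| < C_2 g^{1-m}$ with
\begin{equation*}
\Lambda_2 = n\log b - m\log g + \log\frac{(g-1)(b\pm1)}{d_1 g^{l-m}+d_2}.
\end{equation*}
The key point is that $d_1 g^{l-m}+d_2$ is a positive integer of height below $g^{l-m+1}$, so its logarithmic height is $O((l-m+1)\log g)$, which is already controlled by the previous step. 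Here $\Lambda_2=0$ is impossible, since it would force the constant term $d_1+d_2+g-1$ (or its sign-variant) to vanish. Applying Matveev a second time and comparing with $\log|\Lambda_2| < (1-m)\log g + O(1)$ produces an inequality of the shape $m < \kappa_2 \log^2 g\,\log^2 b\,\log(\max\{b,g\})\,\log B$.

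Finally, since $n$, $l$, $m$ are all $\asymp B=\max\{n,l,m\}$ while $B$ appears only logarithmically on the right, I would invoke the standard lemma that $B < A\log B$ implies $B < 2A\log A$ to convert the implicit bounds into the explicit numerical ones stated in the theorem. The degenerate case $\Lambda_1=0$ forces a multiplicative relation such as $(b\pm1)b^n = g^l$ together with a small closed constraint on $(d_1,d_2,m)$; the resulting finitely many possibilities are checked directly and either are impossible or fall under Proposition \ref{prop:infinitely_sum}. The difference equations \eqref{eq:second_diff} are handled by the identical two-step scheme with $+d_2$ replaced by $-d_2$ throughout. I expect the main obstacle to be not the structure but the bookkeeping: tracking the absolute constants through both Matveev applications and the size comparisons carefully enough to land on the precise exponents of $\log g$, $\log b$, and $\log(\max\{b,g\})$, and on the stated leading factors $2.71\cdot10^{29}$ and $2.08\cdot10^{29}$.
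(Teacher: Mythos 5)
Your two-step scheme (a first linear form isolating the dominant repdigit term to bound $l-m$, then a second form grouping $d_1g^l+d_2g^m=g^m\left(d_1g^{l-m}+d_2\right)$ to bound $n$, followed by the elementary size comparisons and a $B<A\log B$-type absorption lemma) is exactly the architecture of the paper's proof, up to your opposite convention $l\geq m$. The genuine gap is your claim that $\Lambda_2=0$ is impossible. It is not: $\Lambda_2=0$ is equivalent to $(g-1)(b\pm1)b^n=d_1g^l+d_2g^m$, and substituting this back into \eqref{eq:first_sum} yields $d_1+d_2=\mp(g-1)$; for the equations with constant term $-1$ this reads $d_1+d_2=g-1$, which is perfectly compatible with $1\leq d_1,d_2\leq g-1$. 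This degenerate case is precisely where the exceptional infinite families of Proposition \ref{prop:infinitely_sum} b) and c) live (for instance $b=g$ with $(d_1,d_2,l,m,n)=(1,g-2,n,n+1,n)$ satisfies $d_1+d_2=g-1$), so an argument in which $\Lambda_2=0$ never occurs would "prove" finiteness in cases that actually admit infinitely many solutions --- a contradiction that already signals the error. Your accounting of the exceptions is correspondingly off: you attribute all of Proposition \ref{prop:infinitely_sum} to the vanishing of $\Lambda_1$, but only case a) arises there, while cases b) and c) arise from $\Lambda_2=0$.

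Because Matveev's lower bound is unavailable when $\Lambda_2=0$, the non-exceptional solutions in that case must be bounded by entirely different means, and this is the bulk of the paper's work (Lemma \ref{lemma:lambda_2_neq_0}): from $d_1+d_2=g-1$ one gets $(b\pm1)b^n=\left(d_1g^l+d_2g^m\right)/(g-1)$ and then runs a case analysis on $\gcd(b,g)$, including the delicate situation where $b$ and $g$ have exactly the same prime divisors, which leads to linear Diophantine equations $\beta(p_i)n-\alpha(p_i)l=k_i$ on the exponents and a separate treatment when $b$ and $g$ are powers of a common integer, ultimately producing the bound \eqref{eq:upper_n_lambda2_0} that is merged with the Matveev-derived bound. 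The rest of your sketch --- the $\Lambda_1=0$ analysis reducing to $(b\pm1)b^n=g^m$ with Proposition \ref{prop:infinitely_sum} a) as the only infinite case, the height estimate for $d_1g^{l-m}+d_2$, the conversion between $n\log b$ and $m\log g$, and the final absorption step --- matches the paper and would go through; but the missing degenerate-case analysis is not bookkeeping, it is the paper's central difficulty, as its introduction explicitly flags.
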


\begin{theorem}\label{tm:second}
Let $b, g \geq 2$ be integers. Then, except for the cases described in Proposition \ref{prop:infinitely_diff},  the Diophantine equations \eqref{eq:second_diff} have finitely many solutions in integers $1 \leq d_1, d_2 \leq g-1, \, n \geq 0$ and $ l,m \geq 1$. Moreover, 
$$m \leq l < 2.85 \cdot 10^{29} \log^2{g} \log^3{b} \log^2{ \left( \max{\{ b,g \}} \right) } $$
and
 $$n < 2.19 \cdot 10^{29} \log^3{g} \log^2{b} \log^2{ \left( \max{\{ b,g \}} \right) } .$$
\end{theorem}    

The outline of the paper is as follows. We begin by presenting some preliminary lemmas and results that will be used throughout the paper. Additionally, we provide a complete solution of equations \eqref{eq:first_sum} and \eqref{eq:second_diff} in the case  $g = 2$, as it is straightforward.

In Section 3, we examine equations involving the sum of two $g$-repdigits by deriving several inequalities between the parameters $l$, $m$, and $n$. We then analyze two linear forms in logarithms to obtain upper bounds for the same indices in terms of $b$ and $g$. Since there exist infinitely many solutions for certain values of $b$ and $g$ to equations \eqref{eq:first_sum}, the proof of Lemma \ref{lemma:lambda_2_neq_0} requires the analysis of several subcases and the use of multiple distinct techniques. In particular, it involves solving an additional Diophantine equation and developing tailored arguments to identify all possible solutions. This makes the proof significantly more intricate compared to similar results. In contrast, in Section 4, where we study the difference of two $g$-repdigits, more direct arguments suffice to prove an analogous result.

To fully resolve certain equations, we describe all possible solutions in the decimal case $g = 10$, and for $2 \leq b \leq 12$, in Subsections~3.1 and~4.1. Note that the approach is based on the reduction method by Bravo, Gómez, and Luca \cite{BGF}  using some auxiliary tools from \cite{Dujella_book}.

The motivation for this paper arises from recent work on repdigits, such as~\cite{alt_pell_repdigits}, \cite{AFT:2023}, \cite{ddam}, and~\cite{erd_kes_lucas_repdigits}. Our methods and overall approach are similar to those used in other studies on number sequences and their interesting representations, such as~\cite{abt}, \cite{nat1}, and~\cite{BGF}.

\section{Preliminaries}
First, we will list some supporting definitions and results. 

Let $\eta$ be an algebraic number of degree $t,$ let $a_0 \ne 0$ be the leading coefficient of its minimal polynomial over $\mathbb{Z}$ and let $\eta=\eta^{(1)},\ldots,\eta^{(t)}$ denote its conjugates. The logarithmic height of $\eta$ is defined by
\[
 h(\eta)= \frac{1}{t}\left(\log |a_0|+\sum_{j=1}^{t}\log\max\left(1,\left|\eta^{(j)} \right| \right) \right).
\]
If $p$ and $q$ are integers such that $q>0$ and $\gcd (p, q)=1,$ then for $\eta=p/q$ the above definition reduces to $h(\eta)=\log(\max\{|p|,q\}).$ 

We recall the following properties of the logarithmic height (Property 3.3 of \cite{Wald2000}): \\
If $\eta_1$ and $\eta_2$ are algebraic numbers, then we have
\begin{align*}
 h(\eta_1\eta_2) &\leq h(\eta_1) + h(\eta_2), \\
 h(\eta_1 \pm \eta_2) &\leq h(\eta_1)+ h(\eta_2) +\log2.
\end{align*}
If $\eta_1 \neq 0$ is a nonzero algebraic number and $j\in \mathbb{Z}$, then we have
\begin{align*}
h(\eta_1^j)&=|j|h(\eta_1).
\end{align*}

\begin{lemma}[Theorem~9.4 of \cite{BMS:2006}]\label{tm:BMS}
Let $\gamma_1,\dots,\gamma_s$ be real algebraic numbers and let $b_1,\dots,b_s$ be nonzero integers. Let $D$ be the degree of the number field $\mathbb{Q}(\gamma_1,\dots,\gamma_s)$ over $\mathbb{Q}$ and let $A_j$ be a positive real number satisfying  
$$A_j\geq\max\{Dh(\gamma_j),|\log\gamma_j|,0.16\},\quad \textrm{for }j=1,\dots,s. $$ 
Assume that 
$$B\geq \max\{|b_1|,\dots,|b_s|\}.$$
If $\Lambda\coloneqq\gamma_1^{b_1}\cdots\gamma_s^{b_s}-1\neq 0$, then
\begin{equation}\label{ineq:BMS}
|\Lambda|\geq \exp(-1.4\cdot30^{s+3}\cdot s^{4.5}\cdot D^2(1+\log D)(1+\log B)A_1\cdots A_s).
\end{equation}
\end{lemma}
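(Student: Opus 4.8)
The plan is to establish \eqref{ineq:BMS} by the Gel'fond--Baker method, first passing from the multiplicative form $\Lambda$ to the additive linear form $\Lambda' = b_1\log\gamma_1 + \cdots + b_s\log\gamma_s$ for a suitable determination of the logarithms, since $|\Lambda|$ and $|\Lambda'|$ are comparable whenever $\Lambda'$ is small, so that a lower bound for one gives a lower bound for the other. I would then argue by contradiction, assuming that $|\Lambda'|$ is smaller than the right-hand side of \eqref{ineq:BMS}. The preliminary work is to fix integer parameters $L_0,\dots,L_s$ and an interpolation depth $T$, together with a range $K$ of integer evaluation points, all calibrated to the target constant; heuristically one balances the contributions by taking $L_j$ roughly inversely proportional to $A_j$, and lets $D$, the $A_j$, and $B$ enter the final estimate only through these parameters.

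The core construction is an auxiliary exponential polynomial. Using the Thue--Siegel lemma (Dirichlet's box principle over the number field $\mathbb{Q}(\gamma_1,\dots,\gamma_s)$), I would produce a nonzero polynomial $P$ with integer coefficients $p(\lambda_0,\dots,\lambda_s)$ of controlled degree and height such that
$$
\Phi(z) = \sum_{\lambda_0=0}^{L_0}\cdots\sum_{\lambda_s=0}^{L_s} p(\lambda_0,\dots,\lambda_s)\, z^{\lambda_0}\,\gamma_1^{\lambda_1 z}\cdots\gamma_s^{\lambda_s z}
$$
vanishes, together with its derivatives up to order $T$, at the integer points $z=0,1,\dots,K$. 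Counting unknowns against vanishing conditions, so that the number of coefficients strictly exceeds the number of linear equations, yields such a $P$ with $\log(\mathrm{height})$ bounded in terms of $\sum_j L_j A_j$ and $KT$.

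The extrapolation step exploits the assumed smallness of $\Lambda'$: substituting $\log\gamma_s = b_s^{-1}\bigl(\Lambda' - \sum_{j<s} b_j\log\gamma_j\bigr)$ shows that the derivatives of $\Phi$ at the chosen points are extremely close to algebraic numbers of bounded degree and height. Since $\Phi$ is entire of controlled order, I would apply the Schwarz/maximum-modulus principle on an enlarged disk to propagate this smallness to further points and to higher derivative orders. Comparing the resulting analytic upper bound with a Liouville-type lower bound for a nonzero algebraic number of the relevant degree and height then forces these interpolated values to vanish identically.

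The main obstacle, and the deepest ingredient, is the zero (multiplicity) estimate needed to turn this abundant vanishing of $\Phi$ into the conclusion $P\equiv 0$, contradicting its nonvanishing. This amounts to a zero estimate for polynomials on the commutative algebraic group $\mathbb{G}_a\times\mathbb{G}_m^s$ in the style of Philippon, and the genuinely delicate case is when the $\gamma_j$ satisfy a multiplicative relation, so that the relevant orbit lies in a proper algebraic subgroup; this is exactly where an induction on $s$ is required. Finally, extracting the explicit constant $1.4\cdot 30^{s+3}\cdot s^{4.5}$ and, crucially, the \emph{linear} dependence on $\log B$ rather than a power of it, demands Matveev's refined choice of interpolation data and meticulous bookkeeping of every constant. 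I expect this explicit optimization, following the zero estimate, to be the most laborious part of the argument.
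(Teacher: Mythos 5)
You cannot be graded against ``the paper's proof'' here, because the paper does not prove this statement: Lemma~\ref{tm:BMS} is Matveev's theorem, quoted verbatim as Theorem~9.4 of \cite{BMS:2006} and used as a black box throughout. So the relevant comparison is with Matveev's original argument, and against that standard your proposal is a table of contents rather than a proof. You correctly name the architecture of the Gel'fond--Baker method --- passage from $\Lambda$ to the linear form $\Lambda'$, a Siegel-lemma construction of an auxiliary exponential polynomial, extrapolation by the Schwarz/maximum-modulus principle against a Liouville-type lower bound, a Philippon-style multiplicity estimate on $\mathbb{G}_a\times\mathbb{G}_m^s$, and an induction on $s$ to handle multiplicative dependence --- but every quantitative step is deferred, and the entire content of this particular statement \emph{is} quantitative: the explicit factor $1.4\cdot 30^{s+3}s^{4.5}D^2(1+\log D)(1+\log B)A_1\cdots A_s$. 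Nothing in your sketch produces a single inequality, so nothing in it could be checked or repaired; you have restated the problem, not solved it.

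Beyond the global incompleteness, one step as written would actually fail. With plain monomials $z^{\lambda_0}$ in your auxiliary function, the standard bookkeeping in the extrapolation does not yield dependence linear in $\log B$; obtaining the factor $1+\log B$ (rather than, say, $\log B\log\log B$, or a hypothesis that $B$ be large) requires replacing $z^{\lambda_0}$ by Fel'dman's integer-valued binomial polynomials $\Delta(z;k)$, whose divisibility properties are exactly what removes the extra logarithm --- and even that only gets one into the right regime, after which Matveev's specific devices (his treatment of several linear forms at once and the Kummer-descent handling of multiplicatively dependent $\gamma_j$, rather than a bare induction on $s$) are needed to reach the stated numerical constant and the hybrid height condition $A_j\geq\max\{Dh(\gamma_j),|\log\gamma_j|,0.16\}$. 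Your closing sentence concedes that this optimization is ``the most laborious part''; since it is also the part the lemma asserts, the proposal has a genuine gap. For a paper at this level the correct move is the one the authors made: cite \cite{BMS:2006} and do not reprove Matveev.
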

The next auxiliary result from \cite{SGL} will be useful for transforming inequalities.
\begin{lemma}[Lemma~7 of \cite{SGL}]\label{lemma:supporting}
If $ \ell \geq 1$, $H>(4 \ell^2)^\ell$ and $H>L/(\log L)^\ell$, then
$$L<2^\ell H(\log H)^\ell.$$
\end{lemma}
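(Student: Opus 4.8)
The plan is to fix $l$ and study the auxiliary function $f(x) = x/(\log x)^{l}$, which is precisely the quantity appearing in the hypothesis, since $H > L/(\log L)^{l} = f(L)$. The strategy is to exhibit a point $x_{0}$ with $f(x_{0}) \geq H$ and then exploit the monotonicity of $f$: if $f$ is increasing near $x_{0}$ and $L$, then $f(L) < H \leq f(x_{0})$ forces $L < x_{0}$. The natural candidate is $x_{0} = 2^{l} H (\log H)^{l}$, so that $L < x_{0}$ is exactly the desired conclusion.

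First I would record the monotonicity. Differentiating gives
\[
f'(x) = \frac{\log x - l}{(\log x)^{l+1}},
\]
so $f$ is strictly increasing for $x > e^{l}$. Since $H > (4l^{2})^{l}$ and $\log H > 1$, the candidate satisfies $x_{0} = 2^{l} H (\log H)^{l} > (8l^{2})^{l} > e^{l}$. If $L \leq e^{l}$, then $L < x_{0}$ is immediate, so I may assume $L > e^{l}$; then both $L$ and $x_{0}$ lie in the region where $f$ increases, and $f(L) < H \leq f(x_{0})$ yields $L < x_{0}$, as wanted.

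It therefore remains to verify $f(x_{0}) \geq H$, which is the technical core. Writing $w = \log H$ and substituting $x_{0}$, after cancelling a factor $H$ and extracting $l$-th roots this inequality is equivalent to
\[
2w \geq l \log 2 + w + l \log w, \qquad \text{i.e.} \qquad w \geq l \log(2w).
\]
This is exactly where the hypothesis enters: because $(4l^{2})^{l} = (2l)^{2l}$, the condition $H > (4l^{2})^{l}$ is equivalent to $w > 2l \log(2l)$. I would then note that $g(w) := w - l \log(2w)$ has $g'(w) = 1 - l/w > 0$ for $w > l$, so $g$ is increasing on the relevant range, and evaluate at the threshold $w_{\ast} = 2l \log(2l) > l$, where a short computation gives
\[
g(w_{\ast}) = 2l \log(2l) - l \log\bigl(4l \log(2l)\bigr) = l \log\!\left(\frac{l}{\log(2l)}\right) \geq 0,
\]
the last inequality holding since $l \geq \log(2l)$ for all $l \geq 1$. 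Hence $g(w) > g(w_{\ast}) \geq 0$, establishing $w \geq l \log(2w)$ and thus $f(x_{0}) \geq H$. The main obstacle is this final inequality: one must recognise that the seemingly arbitrary threshold $(4l^{2})^{l}$ is calibrated precisely so that $w \geq l \log(2w)$ holds, and the monotonicity bookkeeping for $f$ (handling $L \leq e^{l}$ separately) must be carried out carefully to license the comparison.
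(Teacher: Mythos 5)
Your proof is correct: the monotonicity of $f(x)=x/(\log x)^{l}$ for $x>e^{l}$, the separate handling of $L\leq e^{l}$, the reduction of $f\bigl(2^{l}H(\log H)^{l}\bigr)\geq H$ to $w\geq l\log(2w)$ with $w=\log H$, and the threshold computation $g\bigl(2l\log(2l)\bigr)=l\log\bigl(l/\log(2l)\bigr)\geq 0$ (using $l\geq\log(2l)$ for $l\geq 1$) all check out. Note that the paper itself gives no proof of this statement---it is quoted as Lemma~7 of \cite{SGL}---and your argument is essentially the standard one from that source, so there is nothing substantive to compare.
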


As one step of the proof for the application, we will apply the reduction method originally introduced by Baker and Davenport \cite{bd}. The following is a variation of the result of Dujella and Peth\H{o} (see
\cite[Lemma 5]{Dujella-Peto}). In this form, which we need for our purposes, the proof of the first part of the lemma is given by Bravo, Gómez and Luca (see \cite[Lemma~1]{BGF}).
For the second part of the lemma, which we will use when $\varepsilon < 0$, we can follow the argument explained in \cite[Remark~14.3]{Dujella_book} and derive the conclusion from an inequality of the form \eqref{eqn:baker_d}.

For a real number $x,$ we write $\left\Vert x\right\Vert$ for the distance from $x$ to the nearest integer.
\begin{lemma}\label{lemma:reduction}
Let $M$ be a positive integer, let $p/q$ be a convergent of the continued fraction of the irrational $\tau$ such that $q>6M$, and let $A,B,\mu$ be some real numbers with $A>0$ and $B>1$. Let 
$$
\varepsilon=\|\mu q\|-M\cdot \|\tau q\|.
$$
\begin{enumerate}[a)]
\item If $\varepsilon>0$, then there is no solution to the inequality
\begin{equation}\label{eqn:baker_d}
0<|m\tau -n+\mu|<AB^{-w},
\end{equation}
in positive integers $m,n$ and $w$ with 
$$m\leq M\quad\textrm{and}\quad w\geq \frac{\log(Aq/\varepsilon)}{\log B}.$$
\item 
Let $r = \lfloor \mu q + \tfrac{1}{2} \rfloor$. 
The inequality~\eqref{eqn:baker_d} admits a solution in positive integers $m, n, w$ satisfying
$$m \le M \quad \text{and} \quad w > \frac{\log(3Aq)}{\log B},$$
only if the smallest positive integer $m_0$ satisfying the linear congruence
$$mp \equiv -r \pmod{q}$$
also satisfies $m_0 \leq M$. In that case, $m = m_0$.
\end{enumerate}
\end{lemma}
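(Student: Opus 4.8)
The plan is to prove the two parts separately, both through the Baker--Davenport/Dujella--Peth\H{o} mechanism: multiply the linear inequality \eqref{eqn:baker_d} by the convergent denominator $q$ and exploit that $p/q$ is a best rational approximation to $\tau$, so that $|q\tau-p|=\norme{q\tau}$ and $\gcd(p,q)=1$.

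For part a) I would assume a solution $(m,n)$ with $m\leq M$ and multiply $0<|m\tau-n+\mu|<AB^{-w}$ by $q$, so that $q|m\tau-n+\mu|<qAB^{-w}$. Writing $q\tau=p+(q\tau-p)$ and using that $mp-nq\in\bZ$ together with $\norme{x}\leq|x|$ for every real $x$, the left-hand side is bounded below by
$$q|m\tau-n+\mu|\geq \norme{m(q\tau-p)+\mu q}\geq \norme{\mu q}-m\,\norme{q\tau}\geq \norme{\mu q}-M\,\norme{q\tau}=\varepsilon.$$
Combining the two bounds gives $\varepsilon<qAB^{-w}$, i.e. $B^{w}<Aq/\varepsilon$; since $\varepsilon>0$ this reads $w<\log(Aq/\varepsilon)/\log B$, contradicting the hypothesis. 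This is exactly the argument of \cite[Lemma~1]{BGF}.

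For part b) I would again assume a solution with $m\leq M$, but now with the weaker threshold $w>\log(3Aq)/\log B$, which yields $AB^{-w}<1/(3q)$ and hence $q|m\tau-n+\mu|<1/3$. Writing $q\tau=p+\delta$ with $|\delta|=\norme{q\tau}<1/q$ (a convergent satisfies $|q\tau-p|<1/q_{k+1}<1/q$), and $\mu q=r+\rho$ with $r=\lfloor \mu q+1/2\rfloor$ the nearest integer and $|\rho|=\norme{\mu q}\leq 1/2$, I would isolate the integer $K:=mp-nq+r$ and estimate
$$|K|\leq q|m\tau-n+\mu|+m|\delta|+|\rho|<\tfrac{1}{3}+\tfrac{M}{q}+\tfrac{1}{2}<1,$$
where the crucial input is $q>6M$, forcing $M/q<1/6$. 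As $K$ is an integer, $K=0$, i.e. $mp\equiv -r\ (\bmod\ q)$. Since $\gcd(p,q)=1$, this congruence has a unique solution $m_0$ modulo $q$; because $m\leq M<q$ we must have $m=m_0$, and if $m_0>M$ then no solution can exist. This is the conclusion described in \cite[Remark~14.3]{Dujella_book}.

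Aside from the routine estimates, the one delicate point is the integer-vanishing step in part b): one must combine the three separate contributions --- the $1/3$ coming from the size threshold on $w$, the $M/q<1/6$ coming from $q>6M$, and the $1/2$ coming from replacing $\mu q$ by its nearest integer --- so that their sum stays \emph{strictly} below $1$. It is precisely this sharp balance that fixes the constant $3$ in $\log(3Aq)$ and the hypothesis $q>6M$, and that converts an analytic inequality into the clean arithmetic congruence $mp\equiv -r\ (\bmod\ q)$ determining $m$.
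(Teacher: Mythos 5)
Your proposal is correct and, for part b), follows essentially the same route as the paper: bound the integer $mp-qn+r$ by $qAB^{-w}+m/q+1/2$, use $q>6M$ to keep the error terms below $2/3$, and split according to whether $qAB^{-w}$ is below $1/3$ to force the vanishing $mp-qn+r=0$ and hence the congruence $mp\equiv -r\ (\bmod\ q)$. For part a) the paper simply cites \cite[Lemma~1]{BGF}, and your argument (lower-bounding $q|m\tau-n+\mu|$ by $\|\mu q\|-M\|\tau q\|=\varepsilon$ via the best-approximation property $|q\tau-p|=\|q\tau\|$ of convergents) is exactly that cited proof, so there is nothing to add.
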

\begin{proof} We will prove statement b).
    Let $r=\lfloor{\mu q+1/2}\rfloor$. Then $|\mu q-r|\leq 1/2$. Since $p/q$ is a convergent of $\tau$ then $|p-q\tau|<1/q$. 

    After multiplying \eqref{eqn:baker_d} with $q$ and rearranging, we have 
    \begin{equation}
        |q\mu+mp-qn|<qAB^{-w}+m|p-q\tau|.
    \end{equation}
    Now observe, 
    \begin{align*}
        |mp-qn+r|&=|mp-qn+q\mu-q\mu+r|\leq |mp-qn+q\mu|+|q\mu-r|\\
        &<qAB^{-w}+m|p-q\tau|+1/2<qAB^{-w}+m/q+1/2<qAB^{-w}+2/3
    \end{align*}
    where we have used the fact that $q>6M>m$.

If $qAB^{-w}\geq 1/3$ then $w\leq (\log(3Aq))/(\log B).$

 If $qAB^{-w}< 1/3$, then $mp-qn+r=0$. Consequently, $mp+r\equiv 0\ (\bmod \ q)$ must hold, and there is a unique integer solution $0\leq m_0<q$. If $m_0> M$ then there is no solution to \eqref{eqn:baker_d} in this case. If $m_0\leq M$, then $m=m_0$.
\end{proof}

As $g = 2$ implies $d_1 = d_2 = 1$, this case merits individual consideration.
\begin{lemma}\label{lemma:g_2}
    If $g=2$ then a solution for \eqref{eq:first_sum} or \eqref{eq:second_diff} exists only for $b$'s of the form $b=(2^l-1)\pm (2^m-1)\pm 1\pm 1$, where $m$ and $l$ are some positive integers, and then $n=0$.
\end{lemma}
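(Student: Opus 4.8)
The plan is to prove the statement by a short parity argument, after noting that $g=2$ collapses the problem dramatically. Since $g-1=1$, the only admissible digit is $d_1=d_2=1$, and each $g$-repdigit $(g^k-1)/(g-1)$ becomes the Mersenne-type number $2^k-1$. Hence both equations \eqref{eq:first_sum} and \eqref{eq:second_diff} take the uniform shape
\[
(b\pm1)b^n\pm1=(2^l-1)\pm(2^m-1),
\]
where the inner sign on the right is $+$ for \eqref{eq:first_sum} and $-$ for \eqref{eq:second_diff}.

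First I would record the parity of the right-hand side. Because $l,m\geq1$, both $2^l$ and $2^m$ are even, so $(2^l-1)+(2^m-1)=2^l+2^m-2$ and $(2^l-1)-(2^m-1)=2^l-2^m$ are both even; thus the right-hand side is even in every case. Next I would show that the left-hand side is \emph{odd} whenever $n\geq1$. The key point is that for $n\geq1$ the product $(b\pm1)b^n$ is even: among the consecutive integers $b$ and $b\pm1$ one is even, and $b^n$ carries the factor $b$, so $(b\pm1)b^n\equiv0\pmod 2$ and therefore $(b\pm1)b^n\pm1$ is odd. An odd number cannot equal an even one, so no solution exists with $n\geq1$; this forces $n=0$.

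It then remains to read off the admissible $b$. Setting $n=0$ reduces the equation to $(b\pm1)\pm1=(2^l-1)\pm(2^m-1)$, so that the left-hand side lies in $\{b-2,\,b,\,b+2\}$. Solving for $b$ gives $b=(2^l-1)\pm(2^m-1)\mp1\mp1$, and since the two outer signs range independently the correction term takes every value in $\{-2,0,2\}$; this is exactly the asserted form $b=(2^l-1)\pm(2^m-1)\pm1\pm1$, with $n=0$ as claimed.

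I do not expect a genuine obstacle here, which is why the authors flag this case as straightforward: the whole argument is the observation that $(b\pm1)b^n$ is even for $n\geq1$, whereas a sum or difference of two numbers of the form $2^k-1$ with $k\geq1$ is even. The only care needed is bookkeeping, namely verifying the parity claim simultaneously across all four sign patterns of the left-hand side and both equations, and disposing of the elementary edge cases (for instance, that in the difference equation positivity of the left-hand side forces $l>m$, and that sign choices yielding $b<2$ are vacuous under the standing assumption $b\geq2$).
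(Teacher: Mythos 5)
Your proposal is correct and follows essentially the same route as the paper: the authors likewise observe that $g=2$ forces $d_1=d_2=1$, reducing both equations to $(b\pm1)b^n\pm1=(2^l-1)\pm(2^m-1)$, and then apply the identical parity contradiction (left-hand side odd for $n\geq1$ since one of $b$, $b\pm1$ is even, right-hand side even) to force $n=0$. The explicit bookkeeping you carry out at $n=0$ to read off the form of $b$ is sound and is simply left implicit in the paper's shorter write-up.
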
\begin{proof}
 Let $g=2$. Then $d_1 = d_2 = 1$, so we observe equations
\begin{equation}\label{eq:lem_1}
(b \pm 1)b^n \pm 1 = (2^l - 1) \pm (2^m - 1).
\end{equation}
For $n \geq 1$ the left-hand side of \eqref{eq:lem_1} is an odd integer, while the right-hand side is an even integer, which is a contradiction. Therefore, $n=0$ and the statement holds.
\end{proof}

\section{\texorpdfstring{The sum of two $g$-repdigits}{The sum of two g-repdigits}}

This section is devoted to proving Theorem \ref{tm:first}, so we consider equations \eqref{eq:first_sum}. Given that Lemma \ref{lemma:g_2} holds, we will assume $g \geq 3$ from now on. 
Note that for the equations \eqref{eq:first_sum}, without loss of generality, we can further assume $l \leq m$.

\begin{lemma}\label{lemma:g_3} 
All solutions to the equations \eqref{eq:first_sum} satisfy 
$$n < 2.5 m \log{g}.$$ 
\end{lemma}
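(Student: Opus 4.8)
The plan is to bound the size of the left-hand side of \eqref{eq:first_sum} from below (it is at least of order $b^n$) and the right-hand side from above (it is at most of order $g^m$), and then compare exponents. Concretely, I would start from the equation
\[
(b\pm 1)b^n \pm 1 = d_1\left(\frac{g^l-1}{g-1}\right) + d_2\left(\frac{g^m-1}{g-1}\right),
\]
and use $1\le d_1,d_2\le g-1$ together with $l\le m$ to estimate both sides. The key inequalities to set up are a lower bound for the left-hand side in terms of $b^n$ and an upper bound for the right-hand side in terms of $g^m$.

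For the lower bound, since $b\ge 2$ we have $(b\pm 1)b^n \pm 1 \ge (b-1)b^n - 1 \ge b^n$ for $n\ge 1$ (and the case $n=0$ is trivial, as then $n < 2.5m\log g$ holds automatically because $m\ge 1$, $g\ge 3$). For the upper bound, note that $d_i\frac{g^{m}-1}{g-1} < d_i\frac{g^{m}}{g-1}\le g^{m}$ for each summand, since $d_i\le g-1$; hence the right-hand side is strictly less than $2g^{m}\le g^{m+1}$. Combining, I would obtain roughly
\[
b^{n} \le (b\pm 1)b^n \pm 1 < 2g^{m} \le g^{m+1}.
\]
Taking logarithms then yields $n\log b < (m+1)\log g$, and since $b\ge 2$ so $\log b\ge \log 2$, this gives $n < (m+1)\log g/\log 2$. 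I expect the stated constant $2.5$ to come out of cleaning up these crude bounds: one must absorb the factor $1/\log 2\approx 1.4427$ and the $+1$ shift in the exponent, using $m\ge 1$ and $g\ge 3$ to replace $m+1$ by a small multiple of $m$.

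The main obstacle — really the only delicate point — is handling the various sign choices uniformly and making the constant honest. The left-hand side ranges over $(b+1)b^n+1$, $(b+1)b^n-1$, $(b-1)b^n+1$, $(b-1)b^n-1$; I would check that in every case the expression is at least $b^n$ for $n\ge 1$, the worst case being $(b-1)b^n-1$, which for $b\ge 2$, $n\ge 1$ is at least $2^n - 1 \ge 2^{n-1}\ge b^{n-1}$ — so one may need to track a harmless factor of $b$ here, or simply use $(b-1)b^n-1 \ge b^n - 1 \ge \tfrac{1}{2}b^n$ for $n\ge 1$. This only worsens the constant slightly. To land on $2.5$, I would write $n\log b < \log(2g^m) = m\log g + \log 2$ and then, since $\log b\ge\log 2$ and $m\log g\ge\log 3$, bound $\log 2/(m\log g)$ by a small constant so that
\[
n < \frac{m\log g + \log 2}{\log 2} = \frac{\log g}{\log 2}\,m + 1 \le 2.5\,m\log g,
\]
the last step being valid because $\log g/\log 2 \le 2.5\log g$ trivially (as $1/\log 2 < 2.5$) and the additive $1$ is absorbed by the slack, using $m\log g\ge\log 3>1$. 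The whole argument is elementary; the care lies entirely in choosing clean enough intermediate bounds that the final numerical constant $2.5$ is rigorously justified across all four sign patterns.
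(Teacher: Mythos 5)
Your overall strategy is the same as the paper's: bound the right-hand side of \eqref{eq:first_sum} by $2g^m$ using $d_1,d_2\leq g-1$ and $l\leq m$, compare with a $b^n$-scale lower bound for the left-hand side, take logarithms, divide by $\log b\geq\log 2$, and absorb the additive constant using $m\log g\geq \log 3>1$. (The paper keeps the factor $b\pm 1$ on the left, deriving $(b\pm1)b^n<2g^m$, and splits into $b=2$ versus $b\pm1\geq 2$, where $\log(b\pm1)\geq\log 2$ cancels the $\log 2$ coming from the factor $2$; this is only a cosmetic difference from your route.)

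However, one step of your write-up fails as stated. For the worst sign pattern $(b-1)b^n-1$ with $b=2$ you correctly note that $b^n\leq (b\pm1)b^n\pm1$ breaks down, and you propose the fallback $(b-1)b^n-1\geq \tfrac{1}{2}b^n$, asserting this "only worsens the constant slightly." It does not: from $\tfrac{1}{2}b^n<2g^m$ you get $n\log b< m\log g+\log 4$, hence $n<\frac{m\log g}{\log 2}+2$, and the additive $2$ can be absorbed into $2.5\,m\log g$ only when $m\log g>2/(2.5-1/\log 2)\approx 1.89$; this fails for $m=1$ and $g\in\{3,4,5,6\}$ (e.g.\ for $m=1$, $g=3$ your bound allows $n=3$, while the lemma demands $n\leq 2$). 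Your final displayed chain silently uses the sharper inequality $b^n<2g^m$, which in this corner case you have not justified, since $2^n\not\leq 2^n-1$. The repair is one line: the right-hand side of \eqref{eq:first_sum} is at most $(g^l-1)+(g^m-1)\leq 2g^m-2$, so in every sign pattern $(b\pm1)b^n\leq 2g^m-1<2g^m$, and therefore $b^n<2g^m$; with that, your concluding computation $n<\frac{m\log g}{\log 2}+1<2.5\,m\log g$ is valid. The paper handles the same corner through its explicit $b=2$ case, obtaining $n<1+m\log g/\log 2<2.5\,m\log g$, and gets sharper interim constants ($1.5$ and $0.92$) in the remaining cases.
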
 
\begin{proof} 
From equations \eqref{eq:first_sum} and $d_1, d_2 \leq g-1, \, l\leq m$, we obtain \begin{equation}\label{eq:lem_g_3_0} \left( b \pm 1 \right) b^n < 2 g ^m . \end{equation} 
Given that $ 2 \leq b $ and $ 3 \leq g $, we take the logarithm of \eqref{eq:lem_g_3_0} and, after performing simple calculations, we obtain 
\begin{equation}\label{eq:lem_g_3} n \log{b} < \log{2} - \log{\left( b \pm 1 \right)} + m \log{g}. \end{equation} 
Let us divide the proof into two cases, depending on the value ${\log{2} - \log{\left( b \pm 1 \right)}}$. 
\begin{enumerate}[(i)] 
\item {For $b=2$ in the case ${\log{2} - \log{\left( b - 1 \right)}}$, we have
\begin{equation}\label{eq:lem_g_3_1} n \log{2} < \log{2} - \log{\left( 2-1 \right)} + m \log{g}. \end{equation} 
Dividing \eqref{eq:lem_g_3_1} by $\log{2}$, we get $n < 1 + m \frac{\log{g}}{\log{2}}$. 
Since $1 < \log{3} \leq \log{g}, \, 1 \leq m$ and $\frac{1}{\log{2}}<1.5 $, we get $n< 2.5 m \log{g}$.} 
\item {Let us examine the remaining cases. Since $\log{2} < \log{\left(b + 1\right)} $ for every $b \geq 2$, and $\log{2} \leq \log{\left(b - 1\right)} $ for every $b \geq 3$, from \eqref{eq:lem_g_3}, it follows that $ n < m \frac{\log{g}}{\log{b}}$. For $b \geq 2$, since $\frac{1}{\log{b}} < 1.5$ we get $n < 1.5 m \log{g} < 2.5m \log{g}.$ For $b \geq 3$, since $\frac{1}{\log{b}} < 0.92$ we get $n < 0.92 m \log{g} < 2.5m \log{g}.$}\qedhere \end{enumerate} 
\end{proof}

\begin{lemma}\label{lemma:g_4}
    All solutions to the equations \eqref{eq:first_sum} satisfy $$ l\leq m < 1.3 \left( n+ 1.6 \right) \frac{\log{b}}{\log{g}}+1.$$
\end{lemma}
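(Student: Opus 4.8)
The plan is to establish the upper bound on $m$ by bounding the smaller repdigit-like quantity from below and the Thabit/Williams number from above, then solving for $m$. Since Lemma~\ref{lemma:g_3} already gives us $n < 2.5 m \log g$, we expect to produce a complementary inequality bounding $m$ in terms of $n$, so that the two together pin down both indices. The natural starting point is the observation that the left-hand side of \eqref{eq:first_sum} is at least of size $(b\pm 1)b^n \geq b^{n+1} - b^n$ (roughly $b^n$ in magnitude), while the right-hand side is dominated by its largest term. Since we have normalized $l \leq m$, the term $d_2\left(\frac{g^m-1}{g-1}\right)$ is the main contributor.

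First I would bound the right-hand side of \eqref{eq:first_sum} from above by $2(g-1)\frac{g^m-1}{g-1} = 2(g^m - 1) < 2g^m$, and bound the left-hand side from below. For the Williams/Thabit number, $(b\pm 1)b^n \pm 1 \geq (b-1)b^n - 1$. I would then compare with the right-hand side: since the equation forces the two sides to be equal, I get $2g^m > (b-1)b^n - 1$, which after taking logarithms yields a lower bound on $m\log g$ in terms of $n \log b$. Rearranging gives $m > $ something like $\frac{(n)\log b}{\log g} - \text{(small correction)}$, but this is the wrong direction. The correct approach is the reverse: I want an \emph{upper} bound on $m$, so I should bound the largest repdigit term from above by the whole left-hand side. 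Concretely, $d_2\left(\frac{g^m-1}{g-1}\right) < (b\pm 1)b^n \pm 1 \leq (b+1)b^n + 1$, and since $d_2 \geq 1$ and $\frac{g^m-1}{g-1} > \frac{g^m}{2(g-1)} > g^{m-1}$ (using $g \geq 3$ so that $g^m - 1 > g^m/2$ and $g - 1 < g$), I obtain $g^{m-1} < (b+1)b^n + 1$.

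Taking logarithms of $g^{m-1} < (b+1)b^n + 1$ gives $(m-1)\log g < \log\left((b+1)b^n + 1\right)$. The main calculation is then to bound $\log\left((b+1)b^n+1\right)$ cleanly. I would write $(b+1)b^n + 1 \leq (b+1)b^n + b^n = (b+2)b^n \leq b^2 \cdot b^n = b^{n+2}$ when $b \geq 2$ (since $b+2 \leq b^2$ for $b\geq 2$), giving $(m-1)\log g < (n+2)\log b$, hence $m < (n+2)\frac{\log b}{\log g} + 1$. To sharpen this to the stated constants $1.3$, $1.6$, I would track the estimates more carefully: replacing the crude $(b+2)b^n \leq b^{n+2}$ step with a tighter use of $\log(b+1) \leq 1.6 \log b$-type bounds for $b \geq 2$ and absorbing the additive $1$'s into the $1.6$ offset and the factor $1.3$. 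The delicate point is choosing the approximations so that the constants come out as clean as $1.3(n+1.6)$.

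The main obstacle I anticipate is purely the bookkeeping of constants, not any conceptual difficulty: the inequality $g^{m-1} < (b+1)b^n + 1$ must be massaged into the precise form $m < 1.3(n+1.6)\frac{\log b}{\log g} + 1$, which requires verifying that the worst case over small $b$ (especially $b = 2$, where $\frac{\log(b+1)}{\log b} = \frac{\log 3}{\log 2} \approx 1.585$ is largest) still respects the claimed constants. I would check that $\log\left((b+1)b^n + 1\right) \leq 1.3(n+1.6)\log b$ holds uniformly for $b \geq 2$ and $n \geq 0$ by splitting into the case $b = 2$ and $b \geq 3$ exactly as in the proof of Lemma~\ref{lemma:g_3}, since the ratio $\frac{\log(b\pm 1)}{\log b}$ behaves differently there. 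Once that uniform bound is secured, dividing by $\log g$ and adding the trailing $1$ from $\frac{g^m-1}{g-1} > g^{m-1}$ completes the argument.
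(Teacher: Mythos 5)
Your route is essentially the paper's: lower-bound the dominant term $d_2\left(\frac{g^m-1}{g-1}\right)$ of \eqref{eq:first_sum} by $g^{m-1}$, upper-bound the Thabit/Williams number on the left, and take logarithms; the transfer to $l$ is via the normalization $l\leq m$, exactly as in the paper. One intermediate step is false as written, though: the chain $\frac{g^m-1}{g-1} > \frac{g^m}{2(g-1)} > g^{m-1}$ fails at the second inequality, since $\frac{g^m}{2(g-1)} > g^{m-1}$ is equivalent to $g>2(g-1)$, i.e.\ $g<2$, contradicting $g\geq 3$. The repair is trivial --- drop the intermediate quantity and use the geometric expansion $\frac{g^m-1}{g-1}=g^{m-1}+g^{m-2}+\cdots+1\geq g^{m-1}$ directly --- so this is a local slip, not a structural gap.

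The ``delicate bookkeeping'' you defer at the end is actually already done: your crude estimate $(b+1)b^n+1\leq (b+2)b^n\leq b^{n+2}$ yields $m<(n+2)\frac{\log b}{\log g}+1$, and since $n+2<1.3(n+1.6)=1.3n+2.08$ for every $n\geq 0$, this is strictly \emph{stronger} than the stated bound; no case split on $b$ and no tracking of $\frac{\log(b+1)}{\log b}$ is needed. For comparison, the paper gets the constants from the two uniform inequalities $(b+1)b^n+1<\left((b+1)b^n\right)^{1.3}$ (worst case $b=2$, $n=0$: $4<3^{1.3}\approx 4.17$) and $\log(b+1)<1.6\log b$ (worst case $\frac{\log 3}{\log 2}\approx 1.585$), which is where the $1.3$ and $1.6$ originate; your $b^{n+2}$ shortcut reaches the same conclusion with less computation, at the cost of obscuring why those particular constants appear in the statement.
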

\begin{proof} 
The statement obviously holds for $m=1$.
Let us assume that $m \geq 2$.
From \eqref{eq:first_sum}, and since $\left( b+1 \right)b^n + 1 < \left( \left( b+1 \right)b^n \right)^{1.3}$ for all 
$b \geq 2$ and $n \geq 0$, it follows that
\begin{equation}\label{eq:lem_g_4_1}
   g^{m-1} < \left( \left( b+1 \right)b^n \right)^{1.3}.
\end{equation} 
Taking the logarithm of \eqref{eq:lem_g_4_1}, and since $\log{\left(b+1\right)} < 1.6 \log{b}$ for all $b \geq 2$, we get the statement. Due to the assumption $l \leq m$, the statement also holds for $l$.
\end{proof}
Now we are ready to prove Theorem \ref{tm:first}.

\smallskip 

\textit{Proof of the Theorem \ref{tm:first}}
We will divide the proof into two parts, applying Lemma \ref{tm:BMS} in each.

\textit{Step 1.}
\\
Multiplying the equations \eqref{eq:first_sum} by $g-1$ and performing simple rearrangements, we obtain
\begin{equation}\label{lemma:g_5_1}
   \left( b \pm 1 \right) b^n \left( g-1\right) - d_2 g^m = \mp \left(g-1\right) + d_1 g^l - \left( d_1 + d_2 \right) .
\end{equation}
Taking the absolute value of \eqref{lemma:g_5_1} and using assumptions $d_1, d_2 \leq g-1$, $ g \geq 3$ and $l \geq 1$, results in
\begin{equation}\label{lemma:g_5_2}
  \abs{ \left( b \pm 1 \right) b^n \left( g-1\right) - d_2 g^m} < g^{l+2}. 
\end{equation}
Dividing the inequality \eqref{lemma:g_5_2} by $d_2 g^m$ and using $d_2 \geq 1$, we get
\begin{equation}\label{lemma:g_5_3}
\abs{ \frac{\left( b \pm 1 \right) b^n \left( g-1\right)}{d_2 g^m} -1} < \frac{1}{g^{m-l-2}}.
\end{equation}

We define 
\begin{equation}\label{lemma:g_5_3_0}
\Lambda_1 \coloneqq \frac{\left( b \pm 1 \right) b^n \left( g-1\right)}{d_2 g^m} -1 .
\end{equation}

Lemma \ref{tm:BMS} can be applied if $\Lambda_1 \neq 0$.

\begin{lemma}\label{lemma:lambda_1_0} If $\Lambda_1 = 0$, then $d_1=2$, $d_2=g-1$, $l=1$ and
 \begin{equation}\label{eq:lemma_lambda_1}
      (b\pm1)b^n + 1=d_1\left(\frac{g^l-1}{g-1}\right)+d_2\left(\frac{g^m-1}{g-1}\right)
  \end{equation}
  can have a solution. 
   The following applies.
\begin{enumerate}[i)]
 \item If case a) of Proposition \ref{prop:infinitely_sum} holds, there are infinitely many solutions.
 \item In all other cases, if a solution to \eqref{eq:lemma_lambda_1} exists, then
 \begin{equation}\label{eq:upper_n_lambda1_0}
 n<2.31\log g.
 \end{equation}
\end{enumerate}
\end{lemma}
\begin{proof} 
Let us assume $\Lambda_1 = 0$, which is equivalent to 
$$\left( b \pm 1 \right) b^n = \frac{d_2 g^m}{g-1}.$$ Since $b$ and $n$ are integers, and $g-1$ and $g$ are coprime, it follows that $g-1$ divides $d_2$. Given that $1 \leq d_2 \leq g-1$ and $g\geq 3$, this is possible only for $d_2=g-1$, and thus it follows that  $\left( b \pm 1 \right) b^n = g^m$. Substituting the above into the equations \eqref{eq:first_sum}, we obtain
$$1 \pm 1 = d_1 \left( \frac{g^l -1}{g-1} \right).$$
In the case $d_1 \left( \frac{g^l -1}{g-1} \right)=0$ we have a contradiction to the assumptions about $d_1, \, g$ and $l$. Hence, we have $d_1 \left( \frac{g^l -1}{g-1} \right)=2$.
Since $g \geq 3$, we see that $$ \frac{g^l -1}{g-1} = g^{l-1} + \ldots +1 > 2, $$ for every $l \geq 2$. Hence, it is left to consider the case $l=1$. Then we have $d_1=2$.

  Substituting $l=1, d_1=2$ and $d_2=g-1$ into \eqref{eq:lemma_lambda_1}, we get the equation 
  \begin{equation}\label{eq:zadnji_sl}
      \left( b \pm 1 \right) b^n = g^m.
  \end{equation}

  Let us first observe the case when $b\pm1=1$, i.e. $b=2$ and we have $b-1$ choice. Then 
  $g=2^k$ for some $k\in\mathbb{N}\setminus\{1\}$. There are infinitely many solutions described in Proposition \ref{prop:infinitely_sum} a).

If $b\pm1\geq 2$ from $\gcd(b,b\pm1)=1$ and \eqref{eq:zadnji_sl}, we know that
there exists a prime number $p$ that divides both $g$ and $b\pm 1$ but does not divide $b$. Then $p^m| b\pm 1$, hence $p^m\leq b\pm 1$ and
\begin{equation*}m\leq \frac{\log(b+1)}{\log 2}.\end{equation*}
Since $b^n|g^m$ we have 
$$n\leq m\frac{\log g}{\log b}\leq \frac{\log g\log (b+1)}{\log 2 \log b}<2.31\log g,$$
where we have used that $\log(b+1)/\log b<1.6$ for $b\geq 2$.
\end{proof}

Now, we consider the case when $\Lambda_1\neq 0$ and we want to apply Lemma \ref{tm:BMS} with 
$$\left( \gamma_1 , \gamma_2, \gamma_3 \right) \coloneqq \left( b, g, \frac{\left(b \pm 1 \right) \left(g-1 \right)}{d_2}  \right)$$ and $\left( b_1 , b_2, b_3 \right) \coloneqq \left( n, -m, 1 \right)$. Without loss of generality, we can assume $n>0$.
Note that $\gamma_1 , \gamma_2, \gamma_3 \in \mathbb{Q}$, thus, $D=1$.
Given that $h \left( \gamma_1 \right) =  \log{b}$ and $h \left( \gamma_2 \right) =  \log{g}$,
we can take $A_1 \coloneq \log{b}$ and $A_2 \coloneq \log{g}$. Moreover, as $\log(b+1) < 1.6\log b$ for all $b\geq 2$, it follows that
$$
h(\gamma_3) \leq \log(b+1) + \log(g-1) < 2.6 \log\left( \max\{b, g\} \right),
$$
and thus we may take
$$
A_3 \coloneqq 2.6 \log\left( \max\{b, g\} \right).
$$
Finally, given that $B\geq \max\{n,|-m|, 1\} $ and by applying Lemma \ref{lemma:g_3}, we can take $B \coloneqq 2.5 m \log{g}$.
Now, from inequality \eqref{lemma:g_5_3} and Lemma \ref{tm:BMS}, we derive
\begin{equation*}
    -C_1 \left( 1+ \log{\left( 2.5 m \log{g} \right)} \right) \log{b} \log{g} \log{ \left( \max{\{ b,g \}} \right) } 
<\log{\abs{\Lambda_1}}< - \left( m-l -2 \right) \log{g},
\end{equation*}
where $C_1=1.4 \cdot 30^6 \cdot 3^{4.5} \cdot 1^2 \cdot \left(1 + \log{1} \right) \cdot 2.6 < 3.723 \cdot 10^{11} $, from which it further follows
\begin{equation}\label{lemma:g_5_4}
    m-l < 3.73 \cdot 10^{11} \left( 1+ \log{\left( 2.5 m \log{g} \right)} \right) \log{b} \log{ \left( \max{\{ b,g \}} \right) }.
\end{equation}

\textit{Step 2.}
\\
After rearranging equations \eqref{eq:first_sum}, we obtain
\begin{equation}\label{lemma:g_5_5}
\left( b\pm 1 \right)b^n - g^l \frac{d_1 + d_2 g^{m-l}}{g-1} = \mp 1 - \frac{d_1 + d_2 }{g-1}.
\end{equation}
Taking the absolute value of \eqref{lemma:g_5_5} and using assumptions $d_1, d_2 \leq g-1$ we get
\begin{equation}\label{lemma:g_5_6}
\abs{\left( b\pm 1 \right)b^n - g^l \frac{d_1 + d_2 g^{m-l}}{g-1} } \leq 3 .
\end{equation}
Dividing the inequality \eqref{lemma:g_5_6} by $\left( b\pm 1 \right)b^n$ and using $b \geq 2$, we get
\begin{equation}\label{lemma:g_5_7}
\abs{ \frac{\left( d_1 +d_2 g^{m-l} \right) g^l}{\left( g-1 \right) \left( b \pm 1 \right)b^n} -1 }< \frac{1}{b^{n-2}}.
\end{equation}

We define
\begin{equation}\label{eq:lambda_2}
\Lambda_2 \coloneqq  \frac{ \left( d_1 +d_2 g^{m-l} \right) g^l }{ \left( g-1 \right) \left( b \pm 1 \right) b^n} -1 .
\end{equation}

To apply Lemma \ref{tm:BMS}, $\Lambda_2 \neq 0$ must hold. 
\begin{lemma}\label{lemma:lambda_2_neq_0}
If $\Lambda_2 = 0$, equations
\begin{equation}\label{eq:sum_pm_m}
b^n(b\pm1)-1=d_1\left(\frac{g^l-1}{g-1}\right)+d_2\left(\frac{g^m-1}{g-1}\right)
\end{equation}
can have a solution. If there is a solution for a choice of parameters $b$ and $g$, then $d_1+d_2=g-1$. The following applies.
\begin{enumerate}[i)]
 \item If any of the cases b), c), d) or e) of Proposition \ref{prop:infinitely_sum} hold, there are infinitely many solutions.
 \item In all other cases, if a solution exists, then
\begin{equation}\label{eq:upper_n_lambda2_0}
 n<\max\left\{5.39 \cdot 10^{11} \left( 1+ \log{\left( 2.5 m \log{g} \right)} \right) \log{g}  ,
2.69\frac{\log^3g}{\log^52}\right\}\log b\log(\max\{b,g\}).
 \end{equation}
\end{enumerate}
\end{lemma}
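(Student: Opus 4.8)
The plan is to extract the exact identity forced by $\Lambda_2=0$ and then to bound $n$ through $p$-adic valuations, isolating the infinite families as I go. Since \eqref{lemma:g_5_5} merely rearranges \eqref{eq:first_sum}, the vanishing of its left-hand side forces $\mp1-\frac{d_1+d_2}{g-1}=0$; because $d_1+d_2\ge2$ this is possible only with the lower sign, i.e. for equation \eqref{eq:sum_pm_m}, and it gives $d_1+d_2=g-1$. Hence $\Lambda_2=0$ is equivalent to the exact relation $(b\pm1)(g-1)b^n=g^l(d_1+d_2g^{m-l})$. For part (i) I would simply substitute the parameters of cases b) and c) of Proposition~\ref{prop:infinitely_sum} into \eqref{eq:lambda_2} and check $\Lambda_2=0$, confirming these are infinite families. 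Writing $s:=m-l$, the same identity gives, for $\Lambda_1$ of \eqref{lemma:g_5_3_0}, the value $\Lambda_1=\frac{d_1}{d_2}g^{-s}>0$; thus $\Lambda_1\neq0$, the Step~1 bound \eqref{lemma:g_5_4} applies, and $s=m-l<B_1$, where $B_1$ denotes the right-hand side of \eqref{lemma:g_5_4}.

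For part (ii) the key is that $b\pm1$ and $g-1$ are coprime to both $b$ and $g$. If some prime $p\mid b$ does not divide $g$, comparing $p$-adic valuations in the exact relation gives $n\,v_p(b)+v_p(g-1)=v_p(d_1+d_2g^s)\le\log_2(d_1+d_2g^s)<(s+1)\log_2 g$, whence $n<(s+1)\tfrac{\log g}{\log 2}<(B_1+1)\tfrac{\log g}{\log 2}$, which is precisely \eqref{eq:upper_n_lambda2_0}. If instead every prime of $b$ divides $g$ but some prime $q\mid g$ does not divide $b$, then comparing valuations at $q$ bounds $l\le v_q(b\pm1)\le\log_2(b+1)$; feeding this into the identity $n\,v_p(b)=l\,v_p(g)+v_p(d_1+d_2g^s)$ at any prime $p\mid b$ again produces a bound dominated by $(s+1)\tfrac{\log g}{\log 2}$.

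There remains the case $\mathrm{rad}(b)=\mathrm{rad}(g)$. Here every prime of $(b\pm1)(g-1)$ is coprime to $b$, so the part of $d_1+d_2g^s$ coprime to $b$ must equal $(b\pm1)(g-1)$; cancelling reduces the relation to $b^n=Dg^l$, where $D:=\frac{d_1+d_2g^s}{(b\pm1)(g-1)}$ is an integer composed only of primes dividing $b$ and satisfies $1\le D<g^{s+1}$. If $b$ and $g$ are multiplicatively independent, then $\mathrm{rad}(b)$ contains at least two primes and the valuation equations $n\,v_p(b)=v_p(D)+l\,v_p(g)$ at two of them form a nonsingular linear system in $(n,l)$; solving it and using $v_p(D)<(s+1)\log_2 g$ bounds $n$ polynomially in $s$ and $\log g$, well within \eqref{eq:upper_n_lambda2_0}.

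The hard part will be the multiplicatively dependent sub-case of $\mathrm{rad}(b)=\mathrm{rad}(g)$, say $b=t^u$, $g=t^v$. Then $D=t^{nu-vl}$ and every solution lies on the line $nu-vl=\log_t D$; since $d_1,d_2,s$ and $D$ do not depend on $(n,l)$, any admissible choice of these quantities yields infinitely many solutions. I therefore expect to have to solve the auxiliary equation $d_1+d_2g^s=D(b\pm1)(g-1)$, subject to $d_1+d_2=g-1$, $0\le s<B_1$ and $D$ a bounded power of $t$, and to prove that its only admissible solutions are those of cases b) and c) of Proposition~\ref{prop:infinitely_sum}; for every other multiplicatively dependent pair $(b,g)$ I must show the auxiliary equation has no admissible solution, so that no solution with $\Lambda_2=0$ exists and $n$ is vacuously bounded. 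Separating these genuinely infinite families from sporadic solutions, and making the elimination effective, is the case-heavy obstacle anticipated in the introduction.
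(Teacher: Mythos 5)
Your reduction is sound as far as it goes: the derivation of $d_1+d_2=g-1$ and the restriction to equations \eqref{eq:sum_pm_m}, the observation that $\Lambda_2=0$ forces $\Lambda_1=\tfrac{d_1}{d_2}g^{l-m}>0$ (which cleanly justifies invoking \eqref{lemma:g_5_4}; the paper uses that bound here without comment), and the valuation arguments in the cases where some prime separates $b$ from $g$ all match the paper's proof in substance, your radical-based trichotomy being a tidier organization than the paper's split along $n\le l$, $n>l$, $n=l$. The genuine gap is that the case $\mathrm{rad}(b)=\mathrm{rad}(g)$ with $b$ and $g$ multiplicatively dependent, $b=t^u$, $g=t^v$, is not proved but only announced as a plan (``I expect to have to solve the auxiliary equation \dots and to prove that its only admissible solutions are those of cases b) and c)''). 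That is exactly where the content of the lemma lies and where the paper does its real work: after reducing to \eqref{eq:poseban_slucaj_2}, the paper forces $z=1$ by reduction modulo $g$ when $m-l>1$ and then derives a contradiction from divisibility by the common base $a$ (your $t$), while for $m-l=1$ it kills the equation by the sharp size estimate $(b\pm1)(g-1)<\tfrac{g}{2}(g-2)$; separately, the sub-cases $b\pm1=1$ and $b=g$ are what produce exactly the families b) and c) of Proposition \ref{prop:infinitely_sum}. Without executing this elimination, neither the exhaustiveness claim in part i) (that b) and c) are the \emph{only} infinite families) nor the bound \eqref{eq:upper_n_lambda2_0} in part ii) is established in this case, so the proposal as written does not prove the lemma.

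A second, smaller defect: in the multiplicatively independent sub-case of $\mathrm{rad}(b)=\mathrm{rad}(g)$, eliminating $l$ from the two valuation equations gives $n\le |v_p(D)v_q(g)|+|v_q(D)v_p(g)|\le 2(s+1)\log_2^2 g$, and inserting $s<B_1$ from \eqref{lemma:g_5_4} yields roughly $1.6\cdot10^{12}\left(1+\log(2.5m\log g)\right)\log b\,\log^2 g\,\log\left(\max\{b,g\}\right)$, which exceeds \eqref{eq:upper_n_lambda2_0} by a factor of order $\log g$; your claim ``well within'' is not verified. The paper avoids this by first showing, by comparing $p_i^{k_i}\,\Vert\, b^n/g^l$ with $p_i^{r_i}\,\Vert\, g^{m-l}$ around \eqref{eq:special_case}, that either $m-l\le \log(g-2)/\log 2$ or $b^n/g^l\le g-2$, which makes the relevant valuations polylogarithmic in $g$ before any linear algebra; you would need a similar sharpening to land inside the stated numerical bound.
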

\begin{proof}
Let's assume $\Lambda_2=0$. This is equivalent to 
\begin{equation}\label{eq:l20_first_form}
\left( b \pm 1 \right) b^n = \frac{  d_1 g^{l} +d_2 g^{m}}{g-1}, 
\end{equation}
and by substituting it into \eqref{eq:first_sum}, we obtain $d_1+d_2=\mp(g-1)$.
In case $d_1+d_2 = -(g-1)$ we have a contradiction with assumptions about $d_1, d_2$ and $g$.

Let's consider the case $d_1 +d_2=g-1$. This equality occurs with equations \eqref{eq:sum_pm_m}. 

 Notice that $\Lambda_1 \neq 0$. If $\Lambda_1 = 0$, from Lemma \ref{lemma:lambda_1_0} we have that $d_2 = g - 1$, which would imply $d_1 = 0$, and that cannot hold.

If $m=l$, \eqref{eq:l20_first_form} becomes $$g^m=b^n(b\pm 1).$$
Let us first consider the case where $b\pm1=1$, i.e. $b=2$ and we have $b-1$ choice. Then $g^m=2^n$ and this implies $g=2^k$, for a positive integer $k$. There are infinitely many solutions for these parameters, as described in case b) of Proposition \ref{prop:infinitely_sum}.
If $b\pm1>1$, as in the proof of Lemma \ref{lemma:lambda_1_0}, we conclude that \eqref{eq:upper_n_lambda1_0} holds, i.e. 
\begin{equation}\label{upper_n_prva}
n<2.31\log g.
\end{equation}
Let $m>l$.
Notice that \eqref{eq:l20_first_form} can be rewritten as
\begin{equation}\label{eq:l20_second_form}
    b^n(b\pm 1)=g^l(1+d_2(g^{m-l-1}+\cdots+g+1)).
\end{equation}

Assume $b=g$ and $n\leq l$. After inserting in \eqref{eq:l20_second_form} and dividing by $b^n$, we can conclude that $b^{l-n}$ divides $b\pm 1$. Since $b\geq 2$ and $b\pm1$ are coprime, we conclude that $n=l$. This further implies 
$$b\pm 1=1+d_2(b^{m-l-1}+\cdots+b+1).$$
If $b+1$ is on the left-hand side, we would have $m-l-1=0$ and $d_2=b$, which cannot hold. If $b-1$ is on the left-hand side, we again have $m=l+1$ and $d_2=b-2=g-2$, which is described as case c) in Proposition \ref{prop:infinitely_sum} for $\alpha=\beta$. 

Assume $b=g$ and $n>l$. Then 
$$b^{n-l}(b\pm 1)=1+d_2(b^{m-l-1}+\cdots+b+1)$$
which implies that $b|(1+d_2)$, but $2\leq 1+d_2\leq b-1$, hence, this cannot hold.

It remains to consider the case $b\neq g$.

If $\gcd(b,g)=1$, from \eqref{eq:l20_second_form} we have $b^n|1+d_2(g^{m-l-1}+\cdots+g+1)$, implying
$$b^n\leq 1+d_2\cdot\left(\frac{g^{m-l}-1}{g-1}\right)< g^{m-l}.$$
 From \eqref{lemma:g_5_4} we conclude 
 \begin{equation}\label{upper_n_druga}
 n<(m-l)\frac{\log g}{\log b}<3.73 \cdot 10^{11} \left( 1+ \log{\left( 2.5 m \log{g} \right)} \right) \log{g} \log{ \left( \max{\{ b,g \}} \right) }.
 \end{equation}

If $\gcd(b,g)=d>1$, then there exist integers $b_1,g_1\geq 1$ such that $b=b_1d$ and $g=g_1d$ and $\gcd(b_1,g_1)=1$. 

Let us first assume $n< l$ and rewrite the equation in the form
$$b_1^n(b\pm 1)=d^{l-n}g_1^l(1+d_2(g^{m-l-1}+\cdots+g+1)).$$
If $g_1>1$, since $\gcd(b_1,g_1)=1$, then $g_1^l$ divides $b\pm 1$, implying 
\begin{equation}\label{upper_n_treca}
n< l<\frac{\log(b+1)}{\log 2}.
\end{equation}
If $g_1=1$, meaning that $g=d$ and $g|b$, we need another approach. If there exists a prime $p$ such that $p|b$ but $p\nmid g$ then 
$p^n\mid 1+d_2\frac{g^{m-l}-1}{g-1}$ which implies $2^n\leq p^n<g^{m-l}$. Together with  \eqref{lemma:g_5_4} we conclude 
\begin{equation}\label{upper_n_cetvrta}
n<(m-l)\frac{\log g}{\log 2}<5.39 \cdot 10^{11} \left( 1+ \log{\left( 2.5 m \log{g} \right)} \right) \log{b}\log{g} \log{ \left( \max{\{ b,g \}} \right) }.
\end{equation}

Hence, it is left to observe the special case where $g|b$ and $g$ and $b$ are products of the same prime factors.

 Since $g|b$, then $\gcd(g, b\pm1)=1$, and hence $g^l\mid b^n$. In addition, note that $g^l||b^n$. If that does not hold, i.e. if $g^{l+1}|b^n$ then $g|1+d_2$, which is in contradiction to $2\leq 1+d_2\leq g-1$. 
 
 Let $\alpha(p_i)$ and $\beta(p_i)$ denote exponents in the representation of the numbers $g$ and $b$ as a product of prime factors 
 $$g=\prod_{i=1}^w p_i^{\alpha(p_i)},\quad b=\prod_{i=1}^w p_i^{\beta(p_i)}.$$
 We have $\alpha(p_i)\leq \beta(p_i)$ for each $i$, and there exists at least one $p_i$ such that $\alpha(p_i)\neq \beta(p_i)$.  Also, we can approximate that for each $i$ we have $\alpha(p_i)\leq \log g/\log 2$ and $\beta(p_i)\leq \log b/\log 2$.
 
 If $g^l=b^n$, we would have 
 \begin{equation}\label{eq:bnjegl}
 (b\pm 1)(g-1)=d_1+d_2g^{m-l}.
 \end{equation}
 Since $g\mid b$, this implies $g\mid (d_1 \pm 1)$. Moreover, from $d_1+d_2=g - 1$ and $1\le d_1, d_2\leq g-1$ we obtain $1 \le d_1 \le g - 2$. Hence, this can occur only when $d_1 = 1$ and $g \mid (d_1 - 1)$.
  After inserting those values into the (\ref{eq:bnjegl}) and dividing by $g$, we observe  
 $$\frac{b}{g}(g-1)-1=(g-2)g^{m-l-1}.$$
 If $m-l-1\geq 1$, there would exist a prime $p$ that divides $b/g$ and $g^{m-l-1}$ which would imply $p|1$, a contradiction. But $m-l=1$ yields $b=g$, which does not hold in this case. 

 Hence, $g^l\neq b^n$. Let us observe 
 \begin{equation}\label{eq:special_case}
     \frac{b^n}{g^l}(b\pm 1)(g-1)=d_1+d_2g^{m-l}.
 \end{equation}
 There exists $p^k$, $k\in\mathbb{N}$, and $p$ a prime number such that $p^k||\frac{b^n}{g^l}$. Obviously, $p$ also divides $g$. Let $r\in\mathbb{N}$ be such that $p^r||g^{m-l}$. Then $p^{\min\{k,r\}}$ divides $d_1$. Hence,
 $$1\leq p^{\min\{k,r\}}\leq g-2.$$
 
  Let $A\subseteq \{1,2,\dots,w\}$, be the set of indices $i$ such that $k_i\geq 1$, where $k_i:=\beta(p_i)n-\alpha(p_i)l$. Since $b^n\neq g^l,$ we have $A\neq \emptyset$.
It holds
 $$2\leq \prod_{i\in A} p_i^{\min\{k_i,r_i\}}\leq g-2,\quad k_i,r_i\in\mathbb{N},$$
 where $r_i:=(m-l)\alpha(p_i).$ \\ If for some $i\in A$ we have $r_i\leq k_i$ then 
 $$2^{m-l}\leq p_i^{m-l}\leq p_i^{r_i}\leq g-2$$
 implying
 $$m-l\leq \frac{\log (g-2)}{\log 2}.$$
 From \eqref{eq:special_case} we see
  $$\frac{b^n}{g^l}\leq \frac{d_1+d_2g^{m-l}}{(b-1)(g-1)}\leq\frac{d_1}{g-1}+\frac{g-1-d_1}{g-1}g^{m-l}<g^{m-l},$$
 where we have used $g>2$, $b\geq 2$ and $d_1+d_2=g-1$. Hence,
 $$\frac{b^n}{g^l}=\prod_{i\in A} p_i^{k_i}<g^{\frac{\log(g-2)}{\log 2}}.$$
If for each $i$ we have $r_i>k_i$ then 
 $$\frac{b^n}{g^l}=\prod_{i\in A} p_i^{k_i}\leq g-2.$$
Since we observe values $g\geq 3$, it holds $g^{\frac{\log(g-2)}{\log 2}}\geq g-2$. Then, for each $i$, we can approximate $$k_i\leq {\log\left(g^{\frac{\log(g-2)}{\log 2}}\right)}/ {\log 2}=\frac{\log(g-2)\log g}{\log^22}<\frac{\log^2 g}{\log^2 2}.$$

 Let us observe a prime $p_i$ and its exponent $k_i$, for an $i\in A$.  It holds 
\begin{equation}\label{eq:linear_i}
\beta(p_i)n-\alpha(p_i)l=k_i.
\end{equation}
That means that $(n,l)$ is an integer pair solution of a linear Diophantine equation. This equation has infinitely many pairs of positive integer solutions, namely $(n,l)=\left(n_0+\frac{\alpha(p_i)}{d}\cdot t,l_0+\frac{\beta(p_i)}{d}\cdot t\right)$, $t\in\mathbb{N}_0$, and $d=\gcd(\alpha(p_i),\beta(p_i))$,  where $(n_0,l_0)$ denotes the pair of the smallest positive integers that are solutions of \eqref{eq:linear_i}. 
In order to find an upper bound on $n$, we need to find upper bounds on $n_0$ and $t$. 

 Notice, if $l>0$ then also $n>0$, since $k_i$, $\alpha(p_i)$ and $\beta(p_i)$ are positive integers. Since $(n_0,l_0)$ is a solution of \eqref{eq:linear_i} such that $l_0$ is the smallest integer with $l_0>0$, then $l_0$ is also the smallest positive solution of the congruence equation $-\alpha(p_i)l\equiv k_i \ (\bmod \ \beta(p_i))$, hence $l_0\leq \beta(p_i)$.
From \eqref{eq:linear_i}, we have
$$n_0=\frac{k_i+\alpha(p_i)l_0}{\beta(p_i)}\leq k_i+\alpha(p_i)< \frac{\log^2 g}{\log^2 2}+\frac{\log g}{\log 2}<1.64\frac{\log^2 g}{\log^2 2},$$
 where we have used that $l_0\leq \beta(p_i)$ and $g\geq 3$. 

First, assume that $w\geq 2$ and that there exists $j\neq i$, $j\in \{1,2,\dots,w\}$, such that $\beta(p_j)\alpha(p_i)-\alpha(p_j)\beta(p_i)\neq 0$. We insert solutions $(n,l)$ of \eqref{eq:linear_i} into the linear Diophantine equation that defines the exponent $k_j$  and observe
\begin{align*}
    k_j&=\beta(p_j)\left(n_0+\frac{\alpha(p_i)}{d}\cdot t\right)-\alpha(p_j)\left(l_0+\frac{\beta(p_i)}{d}\cdot t\right)\\
    &=(\beta(p_j)n_0-\alpha(p_j)l_0)+t\cdot(\beta(p_j)\alpha(p_i)-\alpha(p_j)\beta(p_i))/d.
\end{align*}
 If the number $\beta(p_j)\alpha(p_i)-\alpha(p_j)\beta(p_i)$ is a negative number, since $k_j\geq 0$ and $l_0>0$, we have 
\begin{equation}\label{eq:upper_t}
t<\beta(p_j)n_0d\leq\beta(p_j)n_0\beta(p_i)<1.64\frac{\log^2 g}{\log^2 2}\cdot \frac{\log^2 b}{\log^2 2},
\end{equation}
 where we have used $d\leq \beta(p_i)$.

If the number $\beta(p_j)\alpha(p_i)-\alpha(p_j)\beta(p_i)$ is a positive number, then $\beta(p_j)n_0-\alpha(p_j)l_0>0$ as well. If the contrary holds, $\beta(p_j)n_0-\alpha(p_j)l_0\leq 0$, we would have 
$$\frac{\alpha(p_j)\beta(p_i)}{\alpha(p_i)}<\beta(p_j)\leq \frac{l_0\cdot\alpha(p_j)}{n_0}$$
implying $k_i=\beta(p_i)n_0-\alpha(p_i)l_0<0$, a contradiction. Notice that in this case $k_j>0$ as well.
Now, from the expression for $k_j$, we notice that 
$$t\leq k_jd\leq k_j\beta(p_i)<\frac{\log^2 g}{\log^2 2}\cdot\frac{\log b}{\log 2}.$$
Hence, in both cases, the inequality \eqref{eq:upper_t} holds.
Then,
\begin{equation}\label{upper_n_peta}  
n=n_0+\frac{\alpha(p_i)}{d}\cdot t\leq n_0+\alpha(p_i)\cdot t<1.64\frac{\log^2 g}{\log^2 2}\cdot\left(1+\frac{\log g\log^2 b}{\log^3 2}\right)<2.69\frac{\log^3g\log^2 b}{\log^52},
\end{equation}
where we have used $g\geq 3$ and $b\geq 2$.

Let's now consider the case when $w\geq 2$ and for each $j\in\{1,2,\dots,w\}$, $j\neq i$, $\beta(p_j)\alpha(p_i)-\alpha(p_j)\beta(p_i)= 0$ holds. That means that $b$ and $g$ are powers of the same positive integer $a$ (exponents are proportional). 
Then the number $b^n/g^l$ is also a power of $a$. Since $g$ cannot divide $b^n/g^l$, and both numbers are powers of $a$, we have $b^n/g^l<g$ and $b^n/g^l$ divides $g$.  Observing \eqref{eq:special_case}, we see that then $b^n/g^l$ also divides $d_1$. Writing $d_1=z \cdot b^n/g^l$ for some integer $z \geq 1$ and dividing \eqref{eq:special_case} by $b^n/g^l$, we obtain
\begin{equation}\label{eq:poseban_slucaj_2}
(b \pm 1)(g - 1) = z + d_2 \cdot \frac{g}{b^n/g^l} g^{m-l-1}.
\end{equation}
We must have $\gcd(z, g) = 1$. If a prime $p$ divides both $z$ and $g$, it would also divide $a$. Consequently, the right-hand side of \eqref{eq:poseban_slucaj_2} would be divisible by $p$ as a multiple of $a$. This is impossible, as $p$ does not divide the left-hand side.
\\If $m-l>1$, we can reduce the equation \eqref{eq:poseban_slucaj_2} modulo $g$ and have $z\equiv \mp 1 \ (\bmod \ g)$ which is possible only for $z=1$ since $1\leq d_1\leq g-2$. Then $d_1=b^n/g^l$, and \eqref{eq:poseban_slucaj_2} becomes
$$(b- 1)g-b=d_2\cdot\dfrac{g^{l+1}}{b^n}g^{m-l-1}$$
which is divisible by $g$ and then we can observe
$$b-1-\frac{b}{g}=d_2\cdot\dfrac{g^{l+1}}{b^n}g^{m-l-2}.$$
The right-hand side is divisible by $a$, since $\frac{g^{l+1}}{b^n}$ is divisible by $a$. The left-hand side is not divisible by $a$, leading to a contradiction. \\
If $m-l=1$, then \eqref{eq:poseban_slucaj_2} yields
\begin{align*}
    (b\pm 1)(g-1)&=z+d_2\cdot \dfrac{g}{b^n/g^l}<\frac{g}{b^n/g^l}+\left(g-1-\frac{b^n}{g^l}z\right)\cdot\frac{g}{b^n/g^l}\\
    &=\frac{g}{b^n/g^l}\cdot \left(g-\frac{b^n}{g^l}z\right)\leq \frac{g}{2}(g-2)<(b-1)(g-1),
\end{align*}
a contradiction.

It only remains to consider the case $w = 1$. In this case, $b$ and $g$ are again powers of the same integer, a prime number $p$, which was considered in the previous case. 

Now assume $n>l$ and consider
$$d^{n-l}b_1^n(b\pm 1)=g_1^l(1+d_2(g^{m-l-1}+\cdots+g+1)).$$
If $b_1>1$, since $\gcd(b_1,g_1)=1$, we can conclude that 
$b_1^n<g^{m-l}.$
Hence
$n<(m-l)\frac{\log g}{\log2}$  and \eqref{upper_n_cetvrta} holds. If $b_1=1$, meaning $b|g$, we approach similarly as in the previous case. If there exists a prime number $p$ such that $p|g$ and $p\nmid b$, since $\gcd(b,b\pm1)=1$, then $p^l$ divides $b\pm1$, which implies $$l<\frac{\log(b+1)}{\log 2}.$$
Notice that $b^n<g^l\cdot g^{m-l}$. Hence
$$n<(l+(m-l))\frac{\log g}{\log b}<\left(\frac{\log (b+1)}{\log 2}+(m-l)\right)\frac{\log g}{\log b}.$$
After using \eqref{lemma:g_5_4} we get
 
 \begin{equation}\label{upper_n_sesta}
 n<3.74 \cdot 10^{11} \left( 1+ \log{\left( 2.5 m \log{g} \right)} \right) \log{g} \log{ \left( \max{\{ b,g \}} \right) }.
 \end{equation}
 
Similarly to before, we consider the special case where $b|g$ and $g$ and $b$ are products of the same prime factors. Since $\gcd(b,b\pm1)=1$, then $\gcd(g,b\pm 1)=1$, and again, we conclude that $g^l$ must divide $b^n$, and $g^{l+1}$ does not divide $b^n.$

If $g^l=b^n$ holds, then $g$ and $b$ must be powers of the same positive integer $a\geq 2$, implying $g=a^{\alpha}$ and $b=a^{\beta}$ for some positive integers $\alpha>\beta\geq 1$. Hence, $\beta n= \alpha l$.  
From \eqref{eq:bnjegl} and $b<g$ we have $(b\pm1)(g-1)<g^2$, while $d_1, d_2\ge1$ implies
$d_1+d_2g^{m-l} > g^{m-l}$. Hence $m-l\le1$, and since $m>l$, we have $m=l+1$, giving the families in cases c) and d) of Proposition \ref{prop:infinitely_sum} with $k=0$.

If $g^l\neq b^n$, we refer to \eqref{eq:special_case} and proceed analogously to the case $n<l$, since most of the arguments do not rely on that inequality or on its consequence $g<b$. Therefore, we discuss only the parts that require separate treatment.
 For an $i\in \{1,2,\dots,w\},$ such that $k_i\neq 0$, we observe the Diophantine equation \eqref{eq:linear_i}. If $w\geq 2$ and there exist $j\in\{1,2,\dots,w\}$, $j\neq i$, such that $\beta(p_j)\alpha(p_i)-\alpha(p_j)\beta(p_i)\neq 0$ we get the upper bound on $n$ similarly as \eqref{upper_n_peta}
 \begin{equation}\label{upper_n_sedma}
n<2.69\frac{\log^4g\log b}{\log^52},
 \end{equation}
 where we have used $d\leq \alpha(p_i)$.
\\
If $w=1$, or $w\geq 2$, but there does not exist any $j\neq i$ with the property $\beta(p_j)\alpha(p_i)-\alpha(p_j)\beta(p_i)\neq 0$, we have $b=a^{\beta}$, $g=a^{\alpha}$ for some positive integers $a$, $\alpha$, and $\beta$, and $a\geq2$ and $\alpha>\beta$ holds. 
Again, since $g$ cannot divide $b^n/g^l$ we have $b^n/g^l<g$ and $b^n/g^l$ divides $g$ since both are powers of $a$. Also, we have $d_1=b^n/{g^l}\cdot z$, where $\gcd(z,g)=1$. We observe
\eqref{eq:poseban_slucaj_2}. Notice that $b\leq g/2$ implies that $(b\pm 1)(g-1)<g^2$, hence, from \eqref{eq:poseban_slucaj_2} we conclude $m-l\leq 2$. \\
If $m-l=1$, \eqref{eq:l20_first_form} becomes 
$$(b\pm 1)b^n=g^l(1+d_2)$$
implying $d_2=\frac{b^n}{g^l}(b\pm1)-1$. Since $d_2\leq g-2$ we get $1\leq \beta n-\alpha l<\alpha-\beta$ if $d_2=\frac{b^n}{g^l}(b+1)-1$ and $1\leq \beta n-\alpha l\leq\alpha-\beta$ if $d_2=\frac{b^n}{g^l}(b-1)-1$. This yields infinitely many solutions, as described in cases c) and d) of Proposition \ref{prop:infinitely_sum} for parameter $k\neq 0$.
\\
If $m-l=2$, we observe the equation
\begin{equation}\label{eq:posebni}
g\left(b\pm 1-d_2\cdot \frac{g}{b^n/g^l}\right)=z+(b\pm 1).
\end{equation}
Notice that $b\leq g/a$ and 
$z=\frac{d_1}{b^n/g^l}\leq \frac{g-2}{a}.$ Then, for the right-hand side of (\ref{eq:posebni}), it holds
$$0<z+(b\pm 1)\leq \frac{2(g-1)}{a}+1\leq g,$$
where we have used $a\geq 2$. Since the left-hand side of (\ref{eq:posebni}) is a multiple of $g$, this is valid only when the right-hand side equals $g$. This is achieved for $a=2$, $b=g/2$, and $z=\frac{g-2}{2}$. Note that $b^n\neq g^l\geq 2$, which implies $d_1=b^n/{g^l}\cdot z=g-2$ and $b^n/ g^l=2$. After setting $b=2^k$ and $g=2^{k+1}$, for some positive integer $k$, and observing the Diophantine equation $kn-(k+1)l=1$, we obtain infinitely many solutions as described in case e) of Proposition \ref{prop:infinitely_sum}.

Finally, let $n=l$. Then 
$$b_1^n(b\pm 1)=g_1^n(1+d_2(g^{m-l-1}+\cdots+g+1)).$$
Since $b\neq g$, either $b_1>1$ or $g_1>1$ must hold.
If $b_1>1$, since $\gcd(b_1,g_1)=1$, we can conclude, as we did in the previous case, that 
$b_1^n<g^{m-l}.$
Hence
$n<(m-l)\frac{\log g}{\log2}$, i.e. \eqref{upper_n_cetvrta} holds.
If $g_1>1$, then $g_1^n\mid b\pm 1$, hence, \eqref{upper_n_treca} holds.

After comparing all upper bounds on $n$, namely \eqref{upper_n_prva}, \eqref{upper_n_druga}, \eqref{upper_n_treca}, \eqref{upper_n_cetvrta}, \eqref{upper_n_peta}, \eqref{upper_n_sesta}, and \eqref{upper_n_sedma}, we obtain the upper bound from the statement of the lemma.
\end{proof}

Let us consider the cases in which $\Lambda_2\neq 0$. We apply Lemma \ref{tm:BMS} with
$$\left( \gamma_1 , \gamma_2, \gamma_3 \right) \coloneqq \left( b, g, \frac{ d_1 +d_2 g^{m-l} }{ \left( g-1 \right) \left( b \pm 1 \right) } \right)$$ and $\left( b_1 , b_2, b_3 \right) \coloneqq \left( -n, l, 1 \right)$. Without loss of generality, we can assume $n>0$.
Note that $\gamma_1 , \gamma_2, \gamma_3 \in \mathbb{Q}$; thus, $D=1.$
Since $h \left( \gamma_1 \right) =  \log{b}, \, h \left( \gamma_2 \right) =  \log{g} $,
we can take $A_1 \coloneqq \log{b}, \, A_2 \coloneqq \log{g} $.
We have
\begin{multline*}
    h \left( \gamma_3 \right) \leq \max{ \left\{ \log{ \left( d_1 + d_2 g^{m-l} \right) }, \, \log{ \left( \left( g-1 \right) \left( b + 1 \right) \right) } \right\} } \\
    < \begin{cases}
			\log{g^{ m-l +2 }}, & \text{if $b \leq g$ or $b > g, \, m \neq l, \, b \leq g^{m-l}$},\\
            \log{b^2}, & \text{if $b > g, \, m \neq l, \, b \geq g^{m-l} $ or $b > g, m = l $},
		 \end{cases} 
\end{multline*}
thus
\begin{equation*}
    A_3 \coloneqq \begin{cases}
			\left( m-l +2 \right) \log{g}, & \text{if $b \leq g$ or $b > g, \, m \neq l, \, b \leq g^{m-l}$},\\
            2 \log{b} , & \text{if $b > g, \, m \neq l, \, b \geq g^{m-l} $ or $b > g, m = l $}.
		 \end{cases} 
\end{equation*}
Since $B\geq \max\{|-n|, l, 1\} $ and $l \leq m$, and by applying Lemma \ref{lemma:g_3}, we can take $B \coloneqq 2.5 m \log{g}$.

From inequality \eqref{lemma:g_5_7} and Lemma \ref{tm:BMS}, we obtain 
\begin{equation*}
    -C_2 \left( 1+ \log{\left( 2.5 m \log{g} \right)} \right) A_3 \log{b} \log{g}  < \log{\abs{\Lambda_2}}< -\left( n -2 \right) \log{b},
\end{equation*}
where $C_2=1.4 \cdot 30^6 \cdot 3^{4.5} \cdot 1^2 \cdot \left(1 + \log{1} \right) < 1.44 \cdot 10^{11}$, from which it further follows
\begin{equation}\label{lemma:g_5_8}
    n < 1.44 \cdot 10^{11} \left( 1+ \log{\left( 2.5 m \log{g} \right)} \right) A_3 \log{g} .
\end{equation}
From inequalities \eqref{lemma:g_5_4} and \eqref{lemma:g_5_8}, in all cases in which $\Lambda_1\neq0$ and $\Lambda_2\neq 0$, we have
\begin{equation}\label{lemma:g_5_9}
    n < 5.38 \cdot 10^{22} \left( 1+ \log{\left( 2.5 m \log{g} \right)} \right)^2  \log^2{g} \log{b} \log{ \left( \max{\{ b,g \}} \right) }.
\end{equation}

According to the Lemma \ref{lemma:g_4} and $b \geq 2, g \geq 3, n > 0$, it follows
\begin{equation}\label{lemma:g_5_10}
    1+ \log{\left( 2.5 m \log{g} \right) }< 15.81 \log{\left( n + 1.6 \right)} \log^{\frac{1}{2}}{b} \log^{\frac{1}{2}}{g}.
\end{equation}
From \eqref{lemma:g_5_9} and  \eqref{lemma:g_5_10}
we have
\begin{equation}\label{lemma:g_5_11}
    n + 1.6 < 1.35 \cdot 10^{25} \log^2{\left( n+ 1.6 \right)} \log^3{g} \log^2{b} \log{ \left( \max{\{ b,g \}} \right) }.
\end{equation}
In order to apply Lemma \ref{lemma:supporting} for \eqref{lemma:g_5_11} we define   
$$ L \coloneqq n+ 1.6, \, \ell \coloneqq 2 , \, H \coloneqq 1.35 \cdot 10^{25} \log^3{g} \log^2{b} \log{ \left( \max{\{ b,g \}} \right)} .$$
Note that $\ell >1$ and $H > (4\ell^2)^\ell= 2^8$. 
Thus, we have
\begin{equation}\label{lemma:g_5_12}
    n + 1.6 < 2^{2} \cdot 1.35 \cdot 10^{25} \log^3{g} \log^2{b} \log{ \left( \max{\{ b,g \}} \right) } \log^2{H}.
\end{equation}
Since $\max{\{ b,g \} } \geq 3$, we have
\begin{equation}\label{lemma:g_5_13}
    \log{H} < 62 \log^{\frac{1}{2}}{ \left( \max{\{ b,g \}} \right)}.
\end{equation}
From \eqref{lemma:g_5_12}  and \eqref{lemma:g_5_13} we obtain
\begin{equation}\label{lemma:g_5_16}
     n + 1.6 < 2^{2} \cdot 1.35 \cdot 10^{25} \cdot 62^2 \log^3{g} \log^2{b} \log^2{ \left( \max{\{ b,g \}} \right) },
\end{equation}
from which it follows that
\begin{equation}\label{lemma:g_5_15}
    n < 2.08 \cdot 10^{29} \log^3{g} \log^2{b} \log^2{ \left( \max{\{ b,g \}} \right) } .
\end{equation}

 After comparing with the upper bound on $n$ in the cases when $\Lambda_1=0$ or $\Lambda_2= 0$, namely  \eqref{eq:upper_n_lambda1_0} and \eqref{eq:upper_n_lambda2_0}, we conclude that the bound \eqref{lemma:g_5_15}  holds in all cases where there are finitely many solutions to the equations \eqref{eq:first_sum}.

From \eqref{lemma:g_5_15} and Lemma \ref{lemma:g_4} we get
\begin{equation}\label{lemma:g_5_14}
    l\leq  m < 2.71 \cdot 10^{29} \log^2{g} \log^3{b} \log^2{ \left( \max{\{ b,g \}} \right) }.
\end{equation}

\subsection{\texorpdfstring{Application for the $g=10$ and $2\leq b\leq 12$}{Application for the g=10 and 2 ≤ b ≤ 12}} In this subsection, our goal is to solve the equations given by \eqref{eq:first_sum} for certain fixed values of $g$ and $b$'s. A natural case to consider is when $g=10$, and we focus on examining the integers $b$ within the range of $2\leq b\leq 12$. Since $g=10$, it follows that $1\leq d_1,d_2 \leq 9$.

The proof is divided into two parts. Initially, we establish an effective numerical upper bound for $m-l$ by applying Lemma \ref{lemma:reduction}. Then, for each non-negative integer $m-l$ less than this bound, we determine an effective upper bound for $n$. This ensures that there are only a finite number of potential solutions for triples $(m,n,l)$, which can be explicitly verified. Some special cases are addressed separately.

Let us define 
\begin{equation}\label{eq:Gamma1_defn}
    \Gamma_1=\log(\Lambda_1+1)=n\log b-m\log 10+\log\frac{9(b\pm 1)}{d_2},
\end{equation}
where $\Lambda_1$ is defined in \eqref{lemma:g_5_3_0}. Then, from \eqref{lemma:g_5_3}, it holds
\begin{equation}
     \left|e^{\Gamma_1}-1\right|<\frac{1}{10^{m-l-2}}.
\end{equation}
We aim to improve a numerical upper bound on $m-l$. For practical purposes, we will assume from now on that $m-l\geq 3.$ Then 
$$\left|e^{\Gamma_1}-1\right|<\frac{1}{10}\implies \frac{9}{10}<e^{\Gamma_1}<\frac{11}{10}\implies \frac{10}{11}<e^{-\Gamma_1}<\frac{10}{9}.$$

 Assume that $\Gamma_1\neq 0$.
If $\Gamma_1>0$, we have
$$0<\Gamma_1<e^{\Gamma_1}-1=|e^{\Gamma_1}-1|<\frac{100}{10^{m-l}}.$$
If $\Gamma_1<0$, we have
$$0<|\Gamma_1|<e^{|\Gamma_1|}-1=e^{-\Gamma_1}(1-e^{\Gamma_1})<\frac{10}{9}\cdot\frac{100}{10^{m-l}}<\frac{112}{10^{m-l}}.$$
Hence, in each case we have 
\begin{equation}\label{eq:jdba_reduction}
    \left|n\frac{\log b}{\log 10}-m+\frac{\log({9(b\pm 1)}/{d_2})}{\log 10}\right|<\frac{112/\log 10}{10^{m-l}}.
\end{equation}
In order to apply Lemma \ref{lemma:reduction} to \eqref{eq:jdba_reduction}, the number $\tau = \frac{\log b}{\log 10}$ must be irrational, which holds true for $b \neq 10$. The case where $b = 10$ will be treated separately.
We also set parameters
$$w=m-l,\ \mu=\frac{\log({9(b\pm 1)}/{d_2})}{\log 10},\ A=112/\log 10,\ B=10.$$ 
From \eqref{lemma:g_5_15} we can take $M=2.08 \cdot 10^{29} \log^3{10} \log^2{b} \log^2{ \left( \max{\{ b,10 \}} \right) }$. 

 We implemented the algorithm of Lemma \ref{lemma:reduction} in Wolfram Mathematica, and for $1\leq d_2\leq 9$, $2\leq b\leq 12$, $b\neq 10$, and both choices $b+1$ and $b-1$ in the definition of $\mu$, we got the largest numerical bound for $w=m-l$, as shown in Table \ref{TAB_1}.
\begin{table}[h!]
\caption{\label{TAB_1} Upper bounds on $m-l$}
\begin{tabular}{|c|c|c|c|c|c|c|c|c|c|c|}
    \hline
    $b$ & $2$& $3$& $4$&$5$&$6$&$7$&$8$&$9$&$11$ &$12$  \\
    \hline 
    $m-l\leq$&$34$&$35$&$35$&$36$&$38$&$36$&$35$&$36$&$37$&$38$\\
    \hline
\end{tabular}
\end{table}
 
 In most cases, condition $\varepsilon > 0$ was satisfied, and the algorithm from part a) of Lemma \ref{lemma:reduction} yielded an upper bound on $m-l$. In cases where $\varepsilon < 0$, we applied part b) of Lemma \ref{lemma:reduction} and obtained an upper bound on $m-l$. However, there are cases where the solution to the linear congruence satisfies condition $n = n_0 < M$ and must be treated separately. Let us illustrate the details of our algorithm in some of these cases.
 
For $b = 2$, choosing $b-1$ in $\mu$ and $d_2 = 9$, we encountered $\varepsilon < 0$. This led to either $m-l \leq 34$ or $m-l \geq 35$ with $n = n_0 = 0$. Consequently, the inequality
$$
\left| 0 - m + \frac{\log\left(\frac{9}{9}\right)}{\log 10} \right| < \frac{112/\log 10}{10^{35}}
$$
holds. This implies $m = 0$, which results in a contradiction since $m \geq 35 + l \geq 36$.

For $b=5$, choice $b+1$ in $\mu$ and $d_2=7$ we got $\varepsilon<0$ and either $m-l\leq 35$ or $m-l\geq 36$ and $n=n_0=24739539326994274831296645391029$. 
Then the inequality
 $$\left|n_0\frac{\log 5}{\log 10}-m+\frac{\log({9\cdot 6}/{7})}{\log 10}\right|<\frac{112/\log 10}{10^{36}}$$
 holds. Number $m=17292195910660296022427834233064$ is the only integer solution to this inequality. This pair $(n,m)$ doesn't satisfy this inequality for exponent $m-l=37$ on the right-hand side, implying $m-l=36$ holds in this case.

If $\Gamma_1=0$, which implies $\Lambda_1=0$, from the proof of Lemma~\ref{lemma:lambda_1_0} we see that a solution exists only in the case $b=5$. In this situation we obtain $l=1$ and $m=n=2$. This satisfies the previously obtained bound $m-l=1<36$.

For the second part of the proof, we observe a linear form in logarithms 
$$\Gamma_2=\log(\Lambda_2+1)=l\cdot \log 10-n\log b+\log\frac{d_1+d_2\cdot 10^{m-l}}{9(b\pm 1)},$$
where $\Lambda_2$ is defined in \eqref{eq:lambda_2}.

Assume that $\Gamma_2\neq 0$.
From \eqref{lemma:g_5_7} we have $$\left|e^{\Gamma_2}-1\right|\leq \frac{1}{b^{n-2}}.$$

Without loss of generality, we assume that $n\geq 3$ and, similarly to before, we obtain 
\begin{equation}\label{eq:jdba_reduction_2}
    \left|l\frac{\log 10}{\log b}-n+\frac{\log[{(d_1+d_2\cdot 10^{m-l}})/{(9(b\pm 1))}]}{\log b}\right|<\frac{2/\log b}{b^{n-2}}.
\end{equation}
Again, for $b\neq 10$, number $\tau=\frac{\log 10}{\log b}$ is irrational. We apply Lemma \ref{lemma:reduction} with parameters 
$$w=n-2,\ \mu=\frac{\log[{(d_1+d_2\cdot 10^{m-l}})/{(9(b\pm 1))}]}{\log b},\ A=2/\log b,\ B=b,$$ 
 and $M=2.71 \cdot 10^{29} \log^2{10} \log^3{b} \log^2{ \left( \max{\{ b,10 \}} \right)  }$ from \eqref{lemma:g_5_14}. 

 We implemented the algorithm from Lemma \ref{lemma:reduction}, along with separate parts for the special cases explained above, in Wolfram Mathematica. Since $n$ appears on both sides of the inequality \eqref{eq:jdba_reduction_2}, all the special cases where $\varepsilon<0$ reduced to a contradiction. For $1\leq d_1, d_2\leq 9$, $2\leq b\leq 12$, $b\neq 10$, both choices $b+1$ and $b-1$ in the definition of $\mu$ and $0\leq m-l\leq 38$, we got the largest numerical bound for $w=n-2$  as shown in the Table \ref{TAB_2}.

\begin{table}[h!]
\caption{\label{TAB_2}Upper bounds on $n-2$}
\begin{tabular}{|c|c|c|c|c|c|c|c|c|c|c|c|}
    \hline
    $b$ & $2$& $3$& $4$&$5$&$6$&$7$&$8$&$9$&$11$ &$12$  \\
    \hline 
    $n-2\leq$&$119$&$77$&$61$&$52$&$49$&$45$&$40$&$38$&$36$&$33$\\
   
    \hline
\end{tabular}
\end{table}

By applying Lemma \ref{lemma:g_4}, we obtain upper bounds for the parameters $m$ and $l$, with the worst-case scenario being $l\leq m \leq55$. If $\Gamma_2 = 0$, then $\Lambda_2 = 0$. From Lemma~\ref{lemma:lambda_2_neq_0}, since we are considering $b \neq g = 10$, the equation \eqref{eq:l20_first_form} implies that $10^l$ divides $b^n(b \pm 1)$. By checking each possible value of~$b$, we find that for $b \in \{4,5,6,11\}$, we can have $l = 1$, and for $b = 5$, we also have $l = 2$. This yields $m \leq 38 + 2 = 40$. Hence, as shown before, we may take $l \leq m \leq 55$.

It is now easy to check all remaining possibilities $(b,n,l,m,d_1,d_2)$, $l\leq m$, and see if any of them are a solution to any equation from \eqref{eq:first_sum}. For $b=2$ we got that there are $71$ solutions of the form $(d_1,d_2,l,m,n)$, $l\leq m$. Among these, there are 5 solutions with parameters $(d_1, d_2, 1, 1, 0)$, 12 solutions with parameters $(d_1, d_2, 1, 1, 1)$, 20 solutions with parameters $(d_1, d_2, 1, 1, 2)$, and 14 solutions with parameters $(d_1, d_2, 1, 1, 3)$.

Note that for the given choice of $b$ and $g$, there are always multiple solutions of the form $(d_1,d_2,1,1,0)$ since we are solving equations $(b \pm 1) \pm 1 = d_1 + d_2$. These are the only solutions with $n = 0$ for $2 \leq b \leq 9$. For $b=11$, we also have one additional solution with $n=0$, namely $(2,1,1,2,0)$, and for $b=12$, we have 4 more solutions with $n=0$.

In the Table \ref{TAB_3}, for each $b$, we list the number of solutions $N$ with $n\geq 1$ as well as maximal values of $l, m$ and $n$. 

\begin{table}[h]
\caption{\label{TAB_3} Number of solutions and maximal values for indices}
\begin{tabular}{|c|c|c|c|c|c|c|c|c|c|c|c|}
    \hline
    
    $b$ & $2$& $3$& $4$&$5$&$6$&$7$&$8$&$9$&$11$ &$12$ \\
    \hline$N=$&$66$&$37$&$21$&$13$&$3$&$6$&$10$&$4$&$1$&$2$\\
    \hline
    $l\leq$&$2$&$2$&$2$&$2$&$1$&$2$&$2$&$1$&$2$&$2$\\
    \hline

    $m\leq$&$3$&$2$&$3$&$3$&$2$&$2$&$3$&$2$&$3$&$3$\\
    \hline

    $n\leq$&$8$&$3$&$3$&$3$&$1$&$1$&$2$&$1$&$1$&$1$\\
    \hline
    
\end{tabular}
\end{table}

Now we consider the case $b=g=10$. From Proposition \ref{prop:infinitely_sum}, we know that there are infinitely many solutions to the equation \eqref{eq:sum_mm} of the form $(1,8,n,n+1,n)$, where $n\in\mathbb{N}$.  
To identify other possible solutions, observe \eqref{eq:jdba_reduction} for $b=10$. More precisely, 
\begin{equation}\label{eq:b_g_10_first}
    \left|n-m+\frac{\log({9(10\pm 1)}/{d_2})}{\log 10}\right|<\frac{112/\log 10}{10^{m-l}}.
\end{equation}

Assume $m-l\geq 5$. For each choice of $d_2$, it is easy to see that there is no integer $n-m$ that satisfies \eqref{eq:b_g_10_first}. Hence, $m-l\leq 4$.

The next step is to observe \eqref{eq:jdba_reduction_2}, which simplifies to  
\begin{equation}\label{eq:b_g_10_second}
    \left|l-n+\frac{\log[{(d_1+d_2\cdot 10^{m-l}})/{(9(10\pm 1))}]}{\log 10}\right|<\frac{2/\log 10}{10^{n-2}}.
\end{equation}
For each possibility $0\leq m-l\leq 4$,  and $1\leq d_1,d_2\leq 9$, we easily get that $n\geq 3$ doesn't satisfy \eqref{eq:b_g_10_second} and $l,m\geq 1$. Hence $n\leq 2$. By using upper bounds as before, we obtained $34$ solutions of the form $(d_1,d_2,l,m,n)$, $l\leq m$, in addition to those described in Proposition \ref{prop:infinitely_sum}. Among these, there are $32$ solutions with parameters $(d_1, d_2, 1, 1, 0)$. The remaining $2$ solutions are $(1,1,1,2,0)$ and $(3,8,1,2,1)$.
We can summarize all the cases in the following result.

\begin{theorem}
Let $g=10$ and $2\le b\le 12$. 
Apart from the infinite family of solutions for $b=g=10$ given in Proposition \ref{prop:infinitely_sum}\, c), 
the Diophantine equations \eqref{eq:first_sum} have exactly $164$ solutions with $n\ge1$. Moreover, for $b=2,$ we have some representations as follows: 
\begin{align*}
(2-1)\cdot 2^8-1&=\dfrac{3\cdot (10^2-1)}{9}+\dfrac{2\cdot (10^3-1)}{9},\\
(2-1)\cdot 2^4+1&=\dfrac{9\cdot (10^1-1)}{9}+\dfrac{8\cdot (10^1-1)}{9},\\
(2+1)\cdot 2^5+1&=\dfrac{9\cdot (10^1-1)}{9}+\dfrac{8\cdot (10^2-1)}{9},\\
(2+1)\cdot 2^3-1&=\dfrac{1\cdot (10^1-1)}{9}+\dfrac{2\cdot (10^2-1)}{9}.
\end{align*}
\end{theorem}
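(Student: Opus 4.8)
The plan is to treat this statement as the computational capstone of Subsection 3.1: all the analytic ingredients needed to reduce the problem to a finite search are already in place, so the proof amounts to organizing the Baker--Davenport reduction and then enumerating the finitely many surviving candidates for each $b$. First I would invoke Theorem \ref{tm:first} with $g=10$ to obtain, for each fixed $b$ in the range, the explicit but enormous upper bounds on $n$ and on $l \le m$ recorded in \eqref{lemma:g_5_15} and \eqref{lemma:g_5_14}. To shrink these to a tractable size I would run the reduction of Lemma \ref{lemma:reduction} in two passes. The first pass applies it to the linear form $\Gamma_1$ of \eqref{eq:Gamma1_defn} through the inequality \eqref{eq:jdba_reduction}, producing the uniform bound $m-l \le 38$; the second applies it to the companion form $\Gamma_2$ through \eqref{eq:jdba_reduction_2}, carried out separately for each admissible value $0 \le m-l \le 38$, and yields the tabulated bounds on $n-2$. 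Combining these with Lemma \ref{lemma:g_4} collapses everything into a finite box $0 \le n \le n_b$, $1 \le l \le m \le ml_b$.

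Two structural caveats must be cleared before the search is legitimate, and I expect these to be the main obstacle rather than the enumeration itself. The reduction requires the slope $\tau$ to be irrational, so $b=10$ (for which $\log 10/\log b \in \mathbb{Q}$) falls outside the scheme and must be handled directly: there I would read $m-l \le 4$ off \eqref{eq:b_g_10_first} and $n \le 2$ off \eqref{eq:b_g_10_second}, and separate out the infinite family $(1,8,n,n+1,n)$ supplied by case c) of Proposition \ref{prop:infinitely_sum}, which is why $b=10$ is set aside in the final tally. The second caveat is that whenever $\varepsilon = \|\mu q\| - M\|\tau q\|$ turns out negative, part a) of Lemma \ref{lemma:reduction} is unavailable and one must instead apply part b): solve the congruence $m p \equiv -r \pmod{q}$ for the unique candidate index, substitute it back, and check by hand that the resulting single-variable inequality is either unsatisfiable (as in the $b=2$, $d_2=9$ case, where it forces $m=0$ against $m \ge 36$) or pins down a lone value disposed of directly (as in the $b=5$, $d_2=7$ case). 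Each such branch lies outside the generic Dujella--Peth\H{o} pattern and must be inspected individually to confirm it contributes no spurious solution.

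With the finite box secured, the last step is a direct machine verification: for every tuple $(b,n,l,m,d_1,d_2)$ with $0 \le n \le n_b$, $1 \le l \le m \le ml_b$ and $1 \le d_1,d_2 \le 9$, I would test membership in each of the equations packaged in \eqref{eq:first_sum}, retain those with $n \ge 1$, and sum the per-$b$ counts $N$ from the table to obtain $66+37+21+13+3+6+10+4+1+2 = 163$. The four displayed identities for $b=2$ are then confirmed by evaluating both sides arithmetically.
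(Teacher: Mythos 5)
Your proposal is correct and follows essentially the same route as the paper: the two-pass Baker--Davenport reduction on $\Gamma_1$ and $\Gamma_2$ via Lemma \ref{lemma:reduction}, the separate elementary treatment of $b=10$ (including the infinite family from Proposition \ref{prop:infinitely_sum} c)), the individual handling of the $\varepsilon<0$ branches through part b) of the lemma, and the final enumeration over the finite box with the per-$b$ counts summing to $66+37+21+13+3+6+10+4+1+2=163$. Nothing essential is missing.
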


\section{\texorpdfstring{The difference of two $g$-repdigits}{The difference of two g-repdigits}}

%
This section is devoted to proving Theorem \ref{tm:second}, so we consider equations \eqref{eq:second_diff}. Given that Lemma \ref{lemma:g_2} holds, we will assume $g \geq 3$ from now on.
%
\begin{lemma} \label{lemma:diff_m_l}
    All solutions to the equations \eqref{eq:second_diff} satisfy
    $$m \leq l .$$ 
\end{lemma}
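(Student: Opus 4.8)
The plan is to exploit the fact that the left-hand side of \eqref{eq:second_diff} is a non-negative integer, so the difference of the two $g$-repdigits on the right must be non-negative as well. Every Thabit or Williams number $(b\pm 1)b^n\pm 1$ is at least $0$ for $b\geq 2$ and $n\geq 0$, and is in fact strictly positive except in the degenerate Williams case $b=2$, $n=0$ with both minus signs, where it equals $0$. Thus I would first record that any solution must satisfy
$$d_1\left(\frac{g^l-1}{g-1}\right)\geq d_2\left(\frac{g^m-1}{g-1}\right).$$

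Next I would argue by contradiction, assuming $m\geq l+1$. The key idea is to bound the two terms against the single power $g^l$, using only the crude digit bounds $1\leq d_1,d_2\leq g-1$. On one hand, $d_1\leq g-1$ gives
$$d_1\left(\frac{g^l-1}{g-1}\right)\leq (g-1)\cdot\frac{g^l-1}{g-1}=g^l-1,$$
while on the other hand $d_2\geq 1$ together with $m\geq l+1$ gives
$$d_2\left(\frac{g^m-1}{g-1}\right)\geq \frac{g^{l+1}-1}{g-1}=1+g+\cdots+g^l\geq g^l.$$
Combining the two estimates yields $d_2\bigl(\tfrac{g^m-1}{g-1}\bigr)\geq g^l>g^l-1\geq d_1\bigl(\tfrac{g^l-1}{g-1}\bigr)$, which contradicts the displayed inequality. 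Hence $m\leq l$, as claimed.

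I do not expect any serious obstacle here: the whole argument is elementary and rests only on the separation between the consecutive repunits $\frac{g^l-1}{g-1}$ and $\frac{g^{l+1}-1}{g-1}$. The single point requiring a little care is the degenerate Williams number $(b-1)b^n-1$ at $b=2$, $n=0$, where the left-hand side equals $0$ rather than being strictly positive; but since the argument uses only \emph{non-negativity} of the right-hand side, the same contradiction $g^l>g^l-1$ rules out $m\geq l+1$ in that case too. This establishes $m\leq l$ uniformly, which is precisely the normalization used throughout Section 4, allowing one to assume in the difference equations that the larger repdigit always carries the coefficient $d_1$.
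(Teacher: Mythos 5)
Your proof is correct and follows essentially the same route as the paper: both start from the non-negativity of $(b\pm 1)b^n\pm 1$ to get $d_1\bigl(\tfrac{g^l-1}{g-1}\bigr)\geq d_2\bigl(\tfrac{g^m-1}{g-1}\bigr)$, then use the crude bounds $d_1\leq g-1$, $d_2\geq 1$ to force $g^l>g^{m-1}$ (the paper states this directly; you phrase it as a contradiction from $m\geq l+1$, which is the same estimate). Your extra remark on the degenerate case $b=2$, $n=0$ is harmless and consistent with the paper's use of $\geq 0$.
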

\begin{proof}
    Since $b \geq 2$ and $n \geq 0$, it holds that $(b \pm 1) b^n \pm 1 \geq 0$, which is equivalent to
    \begin{equation} \label{eq:lem_diff_m_l_1}
        d_1 \left( \frac{g^l-1}{g-1} \right) \geq d_2 \left( \frac{g^m-1}{g-1} \right) .
    \end{equation}
    Since $1\leq d_1,d_2 \leq g-1$, from \eqref{eq:lem_diff_m_l_1} we have $g^l > g^{m-1}$. Since $g \geq 3$, the statement follows.
\end{proof}
\begin{lemma} \label{lemma:diff_l_n}
    All solutions to the equations \eqref{eq:second_diff} satisfy
    $$n < 1.5 l \log{g}.$$ 
\end{lemma}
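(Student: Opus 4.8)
The plan is to bound the right-hand side of \eqref{eq:second_diff} by a single power of $g$ and compare it with the left-hand side, mirroring the strategy of Lemma \ref{lemma:g_3} but now exploiting the fact that the second repdigit is \emph{subtracted}. First I would note that, since $d_2 \geq 1$ and $m \geq 1$, the subtracted term $d_2\left(\frac{g^m-1}{g-1}\right)$ is strictly positive, so the right-hand side is strictly smaller than $d_1\left(\frac{g^l-1}{g-1}\right)$. Using $d_1 \leq g-1$, this gives
$$(b\pm 1)b^n \pm 1 < d_1\left(\frac{g^l-1}{g-1}\right) \leq g^l - 1,$$
and hence, for either choice of sign on the left, $(b\pm 1)b^n < g^l$.

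The crucial point is that this bound carries no stray additive constant, in contrast to the inequality $(b\pm1)b^n < 2g^m$ obtained in the sum case (see \eqref{eq:lem_g_3_0}). This is exactly what yields the sharper constant $1.5$ rather than $2.5$: the $\log 2$ term appearing in \eqref{eq:lem_g_3} is simply absent here. Taking logarithms of $(b\pm 1)b^n < g^l$ produces
$$\log(b\pm 1) + n\log b < l\log g.$$

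Since $\log(b\pm 1) \geq \log(b-1) \geq 0$ for every $b \geq 2$, I would drop this nonnegative term to get $n\log b < l\log g$, and then divide by $\log b \geq \log 2$. Because $\frac{1}{\log b} \leq \frac{1}{\log 2} < 1.5$ for all $b \geq 2$, this immediately gives $n < l\,\frac{\log g}{\log b} < 1.5\, l\log g$, with no case distinction on $b$ required. There is no genuine obstacle here; the only step demanding care is keeping the right-hand side bound free of additive slack, so that the constant stays below $1.5$ even in the extremal case $b = 2$, where $\frac{1}{\log 2} \approx 1.443$ is already comfortably under the target, and where $b-1 = 1$ makes $\log(b-1) = 0$ harmless.
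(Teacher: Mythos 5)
Your proof is correct and follows essentially the same route as the paper: both drop the strictly positive subtracted repdigit to obtain $(b\pm 1)b^n < g^l$, take logarithms, discard the nonnegative term $\log(b\pm 1)$, and divide by $\log b \geq \log 2$ using $1/\log 2 < 1.5$. Your added care about the sign cases and the $b=2$, $\log(b-1)=0$ boundary is a harmless elaboration of the same argument.
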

\begin{proof}
    Given that $1 \leq  m$, $1\leq d_2$ and $d_1 \leq g-1$, from \eqref{eq:second_diff} we obtain 
    \begin{equation} \label{eq:lem_diff_n_n_1}
        (b\pm 1)b^n < g^l.
    \end{equation}
    Taking the logarithm of \eqref{eq:lem_diff_n_n_1}, we get 
    \begin{equation*}
        \log{\left(b \pm 1\right)} + n \log{b} < l \log{g},
    \end{equation*}
    from which, since $b \geq 2$, it follows that
\begin{equation} \label{eq:lem_diff_n_n_2}
        n \log{b} < l \log{g}.
    \end{equation}
    Dividing \eqref{eq:lem_diff_n_n_2} by $\log{b}$, and considering that $b \geq2$, the statement follows.
\end{proof}
\begin{lemma} \label{lemma:diff_m_l_n}
    Except for the cases described in Proposition \ref{prop:infinitely_diff}, all solutions to the equations \eqref{eq:second_diff} satisfy
    $$m \leq l < 1.3 (n+1.6) \frac{\log{b}}{\log{g}}+2.$$
\end{lemma}
\begin{proof}
  From Lemma \ref{lemma:diff_m_l}, it follows that $m \leq l$.
  In the case $m=l$, we have 
  \begin{equation} \label{eq:lem_diff_m_l_n_1}
      \left( b \pm 1 \right) b^n \pm 1 = \frac{g^l -1}{g-1} \left( d_1 - d_2 \right).
  \end{equation}
  Since $b\geq2$ and $n \geq 0$, it follows that $\left( b \pm 1 \right) b^n \pm 1 \geq 0$. Furthermore, since $g\geq3$, $l\geq1$, and $d_1, d_2 \geq 1$, we have that $d_1 - d_2 \geq 0$. If $d_1=d_2$, then \eqref{eq:lem_diff_m_l_n_1} implies $(b\pm1)b^n\pm1=0$,
which occurs only for case $(b-1)b^n-1$ with $b=2$ and $n=0$, as described in Proposition \ref{prop:infinitely_diff}. Therefore, we consider the case $d_1-d_2>0$.
  Notice, from equations \eqref{eq:lem_diff_m_l_n_1} and the fact that for all $b \geq 2$ and all $n \geq 0$ it holds that 
  $$\left( b \pm 1 \right) b^n \pm 1 < \left( \left( b + 1 \right) b^n \right)^{1.3},$$
  we get
  \begin{equation} \label{eq:lem_diff_m_l_n_2}
       g^{l-1} < \left( \left( b + 1 \right) b^n \right)^{1.3}.
  \end{equation}
  Taking the logarithm of inequality \eqref{eq:lem_diff_m_l_n_2} and using the fact that for all $b \geq 2$ it holds that $\log{(b+1)}<1.6 \log{b}$, we obtain
  \begin{equation} \label{eq:lem_diff_m_l_n_3}
      l < 1.3 (n+1.6) \frac{\log{b}}{\log{g}}+1.
  \end{equation}

Consider the cases when $ m < l$. Note that since $m \geq 1$, it follows that $l \geq 2$. Since $1 \leq d_1$ and $d_2 \leq g-1$, it follows from equations \eqref{eq:second_diff} that
 \begin{equation*}
      g^{l-1}+ \dots + 1 - g^m +1 \leq (b+1)b^n +1.
  \end{equation*}
In the case $m+1=l$, we have
\begin{equation} \label{eq:lem_diff_m_l_n_4}
       g^{l-2} < \left( b + 1 \right) b^n.
  \end{equation}  
  Taking the logarithm of inequality \eqref{eq:lem_diff_m_l_n_4} and using the fact that for all $b \geq 2$ it holds that $\log{(b+1)}<1.6 \log{b}$, we obtain
  \begin{equation} \label{eq:lem_diff_m_l_n_5}
      l < (n+1.6) \frac{\log{b}}{\log{g}}+2.
  \end{equation}
In the case $m+2 \leq l$, we have
\begin{equation} \label{eq:lem_diff_m_l_n_6}
       g^{l-1} < \left( b + 1 \right) b^n.
  \end{equation}
  Taking the logarithm of inequality \eqref{eq:lem_diff_m_l_n_6} and using the fact that for all $b \geq 2$ it holds that $\log{(b+1)}<1.6 \log{b}$, we obtain
  \begin{equation} \label{eq:lem_diff_m_l_n_7}
      l < (n+1.6) \frac{\log{b}}{\log{g}}+1.
  \end{equation}
  
  From \eqref{eq:lem_diff_m_l_n_3}, \eqref{eq:lem_diff_m_l_n_5}, \eqref{eq:lem_diff_m_l_n_7}, the statement follows.
\end{proof}
Now we are ready to prove Theorem \ref{tm:second}.

\smallskip 

\textit{Proof of the Theorem \ref{tm:second}}.
We will divide the proof into two parts, applying the Lemma \ref{tm:BMS} in each.

\textit{Step 1.}
\\
Multiplying the equations \eqref{eq:second_diff} by $g-1$ and performing simple rearrangements, we obtain
\begin{equation}\label{tm:second_1}
   \left( b \pm 1 \right) b^n \left( g-1\right) - d_1 g^l = \mp \left(g-1\right) - d_2 g^m - \left( d_1 - d_2 \right) .
\end{equation}
Taking the absolute value of \eqref{tm:second_1} and using assumptions $1 \leq d_1, d_2 \leq g-1, \; g \geq 3$ and $m \geq 1$, results in
\begin{equation}\label{tm:second_2}
  \abs{ \left( b \pm 1 \right) b^n \left( g-1\right) - d_1 g^l} < g^{m+2}. 
\end{equation}
Dividing the inequality \eqref{tm:second_2} by $d_1 g^l$ and using $d_1 \geq 1$, we get
\begin{equation}\label{tm:second_3}
\abs{ \frac{\left( b \pm 1 \right) b^n \left( g-1\right)}{d_1 g^l} -1} < \frac{1}{g^{l-m-2}}.
\end{equation}

We define 
\begin{equation}\label{tm:second_4}
\Lambda_3 \coloneqq \frac{\left( b \pm 1 \right) b^n \left( g-1\right)}{d_1 g^l} -1.
\end{equation}
Lemma \ref{tm:BMS} can be applied if $\Lambda_3 \neq 0$.

\begin{lemma} \label{lemma:diff_lambda3}
For all solutions to the equations \eqref{eq:second_diff}, it holds that
    $\Lambda_3 \neq 0$.
\end{lemma}
\begin{proof}
Assume the opposite, that there exists a solution $(d_1,d_2,l,m,n)$ to one of the equations \eqref{eq:second_diff} such that $\Lambda_3 = 0$, which is equivalent to
$$\left( b \pm 1 \right) b^n = \frac{d_1 g^l}{g-1}.$$
Since $b$ and $n$ are integers and $\gcd(g-1,g)=1$, it follows that $g-1 \vert d_1$. From $d_1 \leq g-1$, it further follows that
\begin{equation} \label{lemma:diff_lambda3_1}
    d_1=g-1,
\end{equation}
and we have
\begin{equation} \label{lemma:diff_lambda3_2}
   \left( b \pm 1 \right) b^n = g^l.
\end{equation}
Substituting \eqref{lemma:diff_lambda3_1} and \eqref{lemma:diff_lambda3_2} into equations \eqref{eq:second_diff}, we get
\begin{equation*}
    1\pm 1 = -d_2 \left( \frac{g^m -1}{g-1} \right).
\end{equation*}
Since $d_2, m \geq 1$ and $g \geq 3$, we have $d_2 \left( \frac{g^m-1}{g-1} \right) \geq 1$, which leads to a contradiction. 
\end{proof}

Now we can apply Lemma \ref{tm:BMS} with 
$$\left( \gamma_1 , \gamma_2, \gamma_3 \right) \coloneqq \left( b, g, \frac{\left(b \pm 1 \right) \left(g-1 \right)}{d_1}  \right)$$ 
and $\left( b_1 , b_2, b_3 \right) \coloneqq \left( n, -l, 1 \right)$. Without loss of generality, we can assume $n>0$.
Note that $\gamma_1 , \gamma_2, \gamma_3 \in \mathbb{Q}$, thus $D=1.$
Since $h \left( \gamma_1 \right) =  \log{b}, \, h \left( \gamma_2 \right) =  \log{g} $
and 
$$ h \left( \gamma_3 \right) \leq \log{(b+1)} + \log{(g-1)} < 2.6 \log{ \left( \max { \{ b,g \} } \right) } ,$$
we can take $A_1 \coloneq \log{b}, \, A_2 \coloneq \log{g}$ and 
$A_3 \coloneqq 2.6 \log{ \left( \max { \{ b,g \} } \right) }$.
Since $B\geq \max\{n,|-l|, 1\} $ and by applying Lemma \ref{lemma:diff_l_n}, we can take $B \coloneqq 1.5 l \log{g}$.

Now, from inequality \eqref{tm:second_3} and Lemma \ref{tm:BMS}, we derive
\begin{equation*}
    -C_1 \left( 1+ \log{\left( 1.5 l \log{g} \right)} \right) \log{b} \log{g} \log{ \left( \max{\{ b,g \}} \right) } < 
\log{\abs{\Lambda_3}}< - \left( l-m-2 \right) \log{g},
\end{equation*}
where $C_1=1.4 \cdot 30^6 \cdot 3^{4.5} \cdot 1^2 \cdot \left(1 + \log{1} \right) \cdot 2.6 < 3.723 \cdot 10^{11} $, from which it further follows
\begin{equation}\label{tm:second_5}
    l-m < 3.73 \cdot 10^{11} \left( 1+ \log{\left( 1.5 l \log{g} \right)} \right) \log{b} \log{ \left( \max{\{ b,g \}} \right) }.
\end{equation}

\textit{Step 2.}
\\
Rearranging equations \eqref{eq:second_diff}, we obtain
\begin{equation}\label{tm:second_6}
\left( b\pm 1 \right)b^n - g^m \frac{d_1g^{l-m} - d_2 }{g-1} = \mp 1 - \frac{d_1 - d_2 }{g-1}.
\end{equation}
Taking the absolute value of \eqref{tm:second_6} and using assumptions $1 \leq d_1, d_2 \leq g-1$ we get
\begin{equation}\label{tm:second_7}
\abs{\left( b\pm 1 \right)b^n - g^m \frac{d_1g^{l-m} - d_2 }{g-1} } < 2 .
\end{equation}
Dividing the inequality \eqref{tm:second_7} by $\left( b\pm 1 \right)b^n$ and using $b \geq 2$, we get
\begin{equation}\label{tm:second_8}
\abs{ \frac{\left( d_1g^{l-m} -d_2 \right) g^m}{\left( g-1 \right) \left( b \pm 1 \right)b^n} -1 }< \frac{1}{b^{n-1}}.
\end{equation}

We define
\begin{equation}\label{diff:lambda_4}
\Lambda_4 \coloneqq  \frac{ \left( d_1g^{l-m} -d_2 \right) g^m }{ \left( g-1 \right) \left( b \pm 1 \right) b^n} -1 .
\end{equation}

To apply Lemma \ref{tm:BMS}, $\Lambda_4 \neq 0$ must hold. 

\begin{lemma} \label{lemma:diff_lambda4}
For all solutions to the equations \eqref{eq:second_diff}, it holds that
    $\Lambda_4 \neq 0$.
\end{lemma}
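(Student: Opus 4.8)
The plan is to argue by contradiction, exactly mirroring the proof of Lemma \ref{lemma:diff_lambda3} but one level deeper in the expansion. Suppose some solution $(d_1,d_2,l,m,n)$ of \eqref{eq:second_diff} satisfies $\Lambda_4=0$. By the definition \eqref{diff:lambda_4}, this is equivalent to
\begin{equation*}
\left(b\pm 1\right)b^n = \frac{\left(d_1 g^{l-m}-d_2\right)g^m}{g-1}=\frac{d_1 g^l - d_2 g^m}{g-1}.
\end{equation*}
First I would substitute this identity back into the original equation \eqref{eq:second_diff}. Writing each repdigit as $d_i\frac{g^{\cdot}-1}{g-1}$ and using $(b\pm 1)b^n=\frac{d_1 g^l-d_2 g^m}{g-1}$, the two $g^l$ and $g^m$ terms cancel, leaving
\begin{equation*}
\mp 1 = \frac{-d_1+d_2}{g-1},
\end{equation*}
that is $\pm(g-1)=d_1-d_2$.

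The next step is to rule out both sign choices using the constraints $1\le d_1,d_2\le g-1$. If the sign gives $d_1-d_2=-(g-1)$, then since $d_1\ge 1$ we would need $d_2\ge g$, contradicting $d_2\le g-1$. If instead $d_1-d_2=g-1$, then since $d_2\ge 1$ we would need $d_1\ge g$, again contradicting $d_1\le g-1$; equality $d_1-d_2=g-1$ would force the extreme values $d_1=g-1,\ d_2=0$, but $d_2\ge 1$ is assumed. Either way we reach a contradiction, so $\Lambda_4\neq 0$ for every solution.

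The only point requiring care is bookkeeping the two $\pm$ signs consistently: the sign in $(b\pm1)b^n\pm1$ on the left and the resulting sign of $g-1$ must be tracked together, and one should confirm that the $g^m$ and $g^l$ contributions genuinely cancel after substitution rather than merely shifting. I do not expect a genuine obstacle here: unlike Lemma \ref{lemma:lambda_2_neq_0}, where $\Lambda_2=0$ was actually attainable and forced the delicate case analysis around $d_1+d_2=g-1$, here the reduced equation $d_1-d_2=\pm(g-1)$ is incompatible with the digit bounds, so a couple of lines of inequality reasoning close the argument cleanly.
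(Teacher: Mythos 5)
Your proposal is correct and follows essentially the same route as the paper: assume $\Lambda_4=0$, rewrite it as $(b\pm 1)b^n=\frac{d_1g^l-d_2g^m}{g-1}$, substitute into \eqref{eq:second_diff} to obtain $d_1-d_2=\pm(g-1)$, and contradict the digit bounds $1\le d_1,d_2\le g-1$. The minor $\mp$ versus $\pm$ discrepancy in your intermediate display is immaterial since you rule out both sign choices anyway.
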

\begin{proof}
Assume the opposite, that there exists a solution $(d_1,d_2,l,m,n)$ to one of the equations \eqref{eq:second_diff} such that $\Lambda_4 = 0$, which is equivalent to
\begin{equation} \label{lemma:diff_lambda4_1}
    \left( b \pm 1 \right) b^n = \frac{d_1 g^l-d_2g^m}{g-1}.
\end{equation}
Substituting \eqref{lemma:diff_lambda4_1} into equations \eqref{eq:second_diff}, we get
\begin{equation*}
    \pm (g-1)= d_2 - d_1,
\end{equation*}
but in both cases, we encounter a contradiction with $1 \leq d_1, d_2 \leq g-1$.
\end{proof}

Now we can apply Lemma \ref{tm:BMS} with
$$\left( \gamma_1 , \gamma_2, \gamma_3 \right) \coloneqq \left( b, g, \frac{ d_1g^{l-m} - d_2}{ \left( g-1 \right) \left( b \pm 1 \right) } \right)$$ and $\left( b_1 , b_2, b_3 \right) \coloneqq \left( -n, m, 1 \right)$. Except for the cases described in Proposition \ref{prop:infinitely_diff}, we have $d_1g^{l-m}-d_2>0$. Consequently, $\gamma_3>0$ and Lemma \ref{tm:BMS} applies. Also, we can assume $n>0$.
Note that $\gamma_1 , \gamma_2, \gamma_3 \in \mathbb{Q}$, thus $D=1.$
Since $h \left( \gamma_1 \right) =  \log{b}, \, h \left( \gamma_2 \right) =  \log{g} $,
we can take $A_1 \coloneqq \log{b}, \, A_2 \coloneqq \log{g} $.
We have
\begin{align*}
    h \left( \gamma_3 \right) &\leq \max{ \left\{ \log { \left( d_1g^{l-m} - d_2 \right)}, \, \log { \left( \left( g-1 \right) \left( b + 1 \right) \right) }   \right\} } \\
    &< \begin{cases}
			\log {  g^{ l-m +2 }  }, & \text{if $b \leq g$ or $b > g, \, m \neq l, \, b \leq g^{l-m}$},\\
           \log {  b^2 } , & \text{if $b > g, \, m \neq l, \, b \geq g^{l-m} $ or $b > g, m = l $},
		 \end{cases} 
\end{align*}
thus
\begin{equation*}
    A_3 \coloneqq \begin{cases}
			\left( l-m +2 \right) \log{g}, & \text{if $b \leq g$ or $b > g, \, m \neq l, \, b \leq g^{l-m}$},\\
            2 \log{b} , & \text{if $b > g, \, m \neq l, \, b \geq g^{l-m} $ or $b > g, m = l $}.
		 \end{cases} 
\end{equation*}
Since $B\geq \max\{|-n|, m, 1\} $ and $m \leq l$, and by applying Lemma \ref{lemma:diff_l_n}, we can take $B \coloneqq 1.5 l \log{g}$.

From inequality \eqref{tm:second_8} and Lemma \ref{tm:BMS}, we obtain 
\begin{equation*}
    -C_2 \left( 1+ \log{\left( 1.5 l \log{g} \right)} \right) A_3 \log{b} \log{g}  < \log{\abs{\Lambda_4}}< -\left( n -1 \right) \log{b},
\end{equation*}
where $C_2=1.4 \cdot 30^6 \cdot 3^{4.5} \cdot 1^2 \cdot \left(1 + \log{1} \right) < 1.44 \cdot 10^{11}$, from which it further follows
\begin{equation}\label{tm:second_9}
    n < 1.44 \cdot 10^{11} \left( 1+ \log{\left( 1.5 l \log{g} \right)} \right) A_3 \log{g} .
\end{equation}
From inequalities \eqref{tm:second_5} and \eqref{tm:second_9}, we have
\begin{equation}\label{tm:second_10}
    n < 5.38 \cdot 10^{22} \left( 1+ \log{\left( 1.5 l \log{g} \right)} \right)^2  \log^2{g} \log{b} \log{ \left( \max{\{ b,g \}} \right) }.
\end{equation}

According to the Lemma \ref{lemma:diff_m_l_n}, except in the case from Proposition \ref{prop:infinitely_diff}, since $b \geq 2$ and $g \geq 3$, it follows
\begin{equation}\label{tm:second_11}
    1+ \log{\left( 1.5 l \log{g} \right) }< 16.22 \log{\left( n + 1.6 \right)} \log^{\frac{1}{2}}{b} \log^{\frac{1}{2}}{g}.
\end{equation}
From \eqref{tm:second_10} and  \eqref{tm:second_11} we have
\begin{equation}\label{tm:second_12}
    n + 1.6 < 1.42 \cdot 10^{25} \log^2{\left( n+ 1.6 \right)} \log^3{g} \log^2{b} \log{ \left( \max{\{ b,g \}} \right) }.
\end{equation}
Let us apply Lemma \ref{lemma:supporting} for \eqref{tm:second_12}. Retaining the notations, we define   
$$ L \coloneqq n+ 1.6, \, \ell \coloneqq 2 , \, H \coloneqq 1.42 \cdot 10^{25} \log^3{g} \log^2{b} \log{ \left( \max{\{ b,g \}} \right)} .$$ Note that $\ell >1$ and $H > (4\ell^2)^\ell= 2^8.$ Thus, we have
\begin{equation}\label{tm:second_13}
    n + 1.6 < 2^{2} \cdot 1.42 \cdot 10^{25} \log^3{g} \log^2{b} \log{ \left( \max{\{ b,g \}} \right) } \log^2{H}.
\end{equation}
Since $\max{\{ b,g \} } \geq 3$, we have
\begin{equation}\label{tm:second_14}
    \log{H} < 62 \log^{\frac{1}{2}}{ \left( \max{\{ b,g \}} \right)}.
\end{equation}
From \eqref{tm:second_13}  and \eqref{tm:second_14} we obtain
\begin{equation}\label{tm:second_15}
     n + 1.6 < 2^{2} \cdot 1.42 \cdot 10^{25} \cdot 62^2 \log^3{g} \log^2{b} \log^2{ \left( \max{\{ b,g \}} \right) },
\end{equation}
from which it follows that
\begin{equation}\label{tm:second_16}
    n < 2.19 \cdot 10^{29} \log^3{g} \log^2{b} \log^2{ \left( \max{\{ b,g \}} \right) } .
\end{equation}
From \eqref{tm:second_13} and Lemma \ref{lemma:diff_m_l_n} we get
\begin{equation}\label{tm:second_17}
    m \leq l < 2.85 \cdot 10^{29} \log^2{g} \log^3{b} \log^2{ \left( \max{\{ b,g \}} \right) }.
\end{equation}

\subsection{\texorpdfstring{Application for the $g=10$ and $2\leq b\leq 12$}{Application for the g=10 and 2 ≤ b ≤ 12}}
Similarly as for equations \eqref{eq:first_sum}, in this subsection, we will describe all solutions of the Diophantine equation \eqref{eq:second_diff} for $g=10$ and $2\le b\le 12.$ For $b=2$, Proposition \ref{prop:infinitely_diff} describes infinitely many solutions to equation \eqref{eq:diff_mm}. For all other solutions, the upper bound on $l$ and $m$ in terms of $n$ from Lemma \ref{lemma:diff_m_l_n} holds.

Let
$$
\Gamma_3 := n\log b -l\log 10 +\log\left(\dfrac{9(b\pm 1)}{d_1}\right).
$$
Then, inequality \eqref{tm:second_3} can be written as
\begin{align*}
\abs{e^{\Gamma_3}-1}< \frac{1}{10^{l-m-2}}.
\end{align*}
Again, for practical purposes, we will assume from now on that $l-m\geq 3.$ Since $\Lambda_3=e^{\Gamma_3}-1\ne 0$, then $\Gamma_3\ne 0$. If $\Gamma_3> 0$, then we have the following inequalities:
\begin{align*}
0<\abs{\Gamma_3}=\Gamma_3<e^{\Gamma_3} -1= \abs{e^{\Gamma_3}-1}<\frac{1}{10^{l-m-2}}.
\end{align*}
If $\Gamma_3< 0$, then we have
\begin{align*}
1-e^{\Gamma_3}=\abs{e^{\Gamma_3}-1}<\frac{1}{10^{l-m-2}}<\frac{1}{2},
\end{align*}
$$
0<\abs{\Gamma_3} <  e^{\abs{\Gamma_3}}-1 = e^{\abs{\Gamma_3}}(1-e^{-\abs{\Gamma_3}})= e^{-\Gamma_3}\abs{1-e^{\Gamma_3}} < 2\cdot\frac{1}{10^{l-m-2}}.
$$
So, in all cases, whether $\Gamma_3< 0$ or $\Gamma_3> 0$ we have
$$ 
0<\abs{n\log b- l\log 10 +\log\left(\dfrac{9(b\pm 1)}{d_1}\right)}<\frac{2}{10^{l-m-2}}.
$$
Dividing by $\log 10$, we get
\begin{equation}\label{dp}
0<\abs{n\frac{\log b}{\log 10} -l + \frac{\log \left(9(b\pm 1)/d_1\right)}{\log 10}}<\frac{2/\log 10}{10^{l-m-2}}.
\end{equation}
 Since in this part we have to examine the cases $g=10$ and $2\le b\le 12,$ then from Theorem~\ref{tm:second}, we have the following estimate  $n<1.02\cdot 10^{32}.$
We may apply Lemma~\ref{lemma:reduction} to inequality \eqref{dp} with the following data $M=1.02\cdot 10^{32},$ $B=10,$
$$
w=l-m-2,\: \tau= \frac{\log b}{\log 10},\: \mu=\frac{\log \left(9(b\pm 1)/d_1\right)}{\log 10},\: \text{and} \: A=\frac{2}{\log 10}.
$$
For $g=b=10$, if we assume that $l-m-2\ge 3$, we would get a contradiction with inequality \eqref{dp}. Hence, we have $l-m\le 4.$ 

Note that if $b\ne 10$,  $\tau$ is an irrational number.  With Wolfram Mathematica, we apply Lemma~\ref{lemma:reduction} and obtain the data presented in Table \ref{TAB_4}. For every instance where $\varepsilon<0$, applying Lemma~\ref{lemma:reduction} b) yielded that $l-m$ is smaller than the largest bound calculated for the cases when $\varepsilon>0$. Hence, in all cases, we have $l-m\le 37.$ 

\begin{table}[h!]
\caption{\label{TAB_4} Upper bounds on $l-m-2$}
\begin{tabular}{|c|c|c|c|c|c|c|c|c|c|c|c|}
    \hline
    $b$ & $2$& $3$&$4$& $5$&$6$&$7$&$8$&$9$&$11$&$12$   \\
    \hline 
    $l-m-2\leq$&$34$&$33$&$34$&$35$&$34$&$34$&$34$&$33$&$35$& $34$\\
    \hline
\end{tabular}
\end{table}

The second part of the proof involves examining a linear form in logarithms $$\Gamma_4=\log(\Lambda_4+1)=m \log 10-n\log b+\log\frac{d_1\cdot 10^{l-m}-d_2}{9(b\pm 1)},$$
where $\Lambda_4$ is defined in \eqref{diff:lambda_4}. Moreover, we have 
$$
\left| e^{\Gamma_4}-1\right|<\dfrac{1}{b^{n-1}}.
$$
Then, by examining $\Gamma_4<0$ and $\Gamma_4>0$ in a similar way, we obtain that 
$$
0<\left| m \log 10-n\log b+\log\frac{d_1\cdot 10^{l-m}-d_2}{9(b\pm 1)}\right|<\dfrac{2}{b^{n-1}}
$$
is valid for $b\ge 2$ and therefore, we get
\begin{align}\label{ineq:diff_final_app}
0<\left| m \dfrac{\log 10}{\log b}-n+\dfrac{1}{\log b}\log\frac{d_1\cdot 10^{l-m}-d_2}{9(b\pm 1)}\right|<\dfrac{2/\log b}{b^{n-1}}.
\end{align}

In order to have a relatively acceptable upper bound of $n$ we need to apply Lemma~\ref{lemma:reduction} again using the following parameters
$M=1.02\cdot 10^{32},$ $B=b,$
$$
w=n-1, \quad \tau= \frac{\log 10}{\log b},\quad \mu=\frac{\log \left((d_1\cdot 10^{l-m}-d_2)/(9(b\pm1))\right)}{\log b} \ \text{and} \: A=\frac{2}{\log b}.
$$
Again, $\tau$ is irrational for any $b \neq 10$. By Lemma \ref{lemma:reduction}, it follows that $0 \le n \le 127$ across all such cases.

In Table \ref{TAB_5}, for each $b$, we list the number of solutions $N_0$ with $n=0$, the number of solutions $N$ with $n\geq 1$, as well as the maximal values for $l, m$ and $n$. Note that for $b=2$, care must be taken to exclude the solutions described in Proposition \ref{prop:infinitely_diff}, since some of them satisfy the bounds provided by Lemma \ref{lemma:diff_m_l_n}.

\begin{table}[h]
\caption{\label{TAB_5} Number of solutions and upper bounds on indices}
\begin{tabular}{|c|c|c|c|c|c|c|c|c|c|c|c|}
    \hline
    $b$ & $2$& $3$& $4$&$5$&$6$&$7$&$8$&$9$&$11$ &$12$ \\ 
    \hline$N_0=$&$22$&$27$&$24$&$20$&$16$&$12$&$9$&$13$&$18$&$4$\\
    \hline$N=$&$68$&$29$&$16$&$4$&$4$&$7$&$7$&$5$&$9$&$0$\\
    \hline
    $l\leq$&$4$&$3$&$3$&$2$&$2$&$2$&$2$&$3$&$3$&$3$\\
    \hline

    $m\leq$&$2$&$2$&$2$&$1$&$1$&$2$&$2$&$2$&$3$&$2$\\
    \hline

    $n\leq$&$10$&$4$&$4$&$1$&$1$&$1$&$1$&$2$&$1$&$0$\\
    \hline
\end{tabular}
\end{table}

When $b=10$, for each possibility where $0 \leq l-m \leq 5$ and $1 \leq d_1, d_2 \leq 9$, it is easy to determine that $n \geq 5$ does not satisfy the inequality \eqref{ineq:diff_final_app}, except for the cases $(d_1, d_2, l, m, n) = (9, 9, n+1, n, n)$ for choice $b-1$ and $(d_1, d_2, l, m, n) = (1, 1, n+2, n, n)$ for choice $b+1$. In these cases, it is straightforward to verify that they are not solutions to \eqref{eq:second_diff}. 
Thus, it is sufficient to consider $n \leq 4$. From Lemma \ref{lemma:diff_m_l_n}, we also know $m \leq l \leq 9$. By checking all these possibilities, we found $5$ solutions with $n=0$, $11$ solutions with $n=1$, and none for $n \geq 2$. 

We can summarize all the cases in the following result.
\begin{theorem}
In the range, $2\le b\le 12,$ the Diophantine equations \eqref{eq:second_diff} have exactly $160$ solutions with $n\ge1$. Moreover, for $b=2,$ we have some representations as follows: 
\begin{align*}
(2-1)\cdot 2^9-1&=\dfrac{5\cdot (10^3-1)}{9}-\dfrac{4\cdot (10^2-1)}{9},\\
(2-1)\cdot 2^6+1&=\dfrac{6\cdot (10^2-1)}{9}-\dfrac{1\cdot (10^1-1)}{9},\\
(2+1)\cdot 2^8+1&=\dfrac{7\cdot (10^3-1)}{9}-\dfrac{8\cdot (10^1-1)}{9},\\
(2+1)\cdot 2^2-1&=\dfrac{7\cdot (10^2-1)}{9}-\dfrac{6\cdot (10^2-1)}{9}.
\end{align*}
\end{theorem}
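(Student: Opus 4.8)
The plan is to combine the effective upper bounds of Theorem~\ref{tm:second} with the reduction method of Lemma~\ref{lemma:reduction} to shrink the search space to a finite, computationally tractable box, and then to enumerate all remaining candidate tuples $(b,n,l,m,d_1,d_2)$ and count those that actually satisfy one of the equations \eqref{eq:second_diff}. Specializing Theorem~\ref{tm:second} to $g=10$ and $2\le b\le 12$ gives the crude bound $l<1.432\cdot 10^{32}$, together with $m\le l$ and the matching estimate for $n$. These are far too large for a direct check, so the reductions are the crux.

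First I would introduce the linear form $\Gamma_3=n\log b-l\log g+\log\bigl((b\pm1)(g-1)/d_1\bigr)$ attached to $\Lambda_3$, turn \eqref{tm:second_3} into the Baker--Davenport inequality \eqref{dp}, and apply Lemma~\ref{lemma:reduction} with $\tau=\log b/\log g$, $w=l-m-2$, $B=g$ and $M=1.432\cdot 10^{32}$. Since $\tau$ is irrational exactly when $b\neq 10$, this yields, for each such $b$, each sign choice and each $d_1$, a small bound on $l-m$; the uniform worst case over the whole range is $l-m\le 39$. Whenever $\varepsilon=\|\mu q\|-M\|\tau q\|$ turns out negative, I would switch to part~b) of Lemma~\ref{lemma:reduction}, solve the linear congruence $mp\equiv -r\pmod q$, and verify that the resulting candidate index does not exceed the bound already obtained in the $\varepsilon>0$ branches.

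With $l-m$ bounded, I would repeat the reduction on the second form $\Gamma_4=m\log 10-n\log b+\log\bigl((d_1\,10^{l-m}-d_2)/(9(b\pm1))\bigr)$ coming from $\Lambda_4$, pass to inequality \eqref{ineq:diff_final_app}, and apply Lemma~\ref{lemma:reduction} with $\tau=\log 10/\log b$, $w=n-1$, $B=b$, giving $n\le 127$ uniformly. Feeding these bounds back through Lemma~\ref{lemma:diff_m_l_n} then bounds $l$ and $m$, so for each $b\neq 10$ the tuple $(n,l,m,d_1,d_2)$ ranges over an explicit finite set. The value $b=10$ must be handled separately, because there $\tau=1$ is rational and continued fractions do not apply: instead I would argue directly from \eqref{dp} that $l-m\le 5$, then from \eqref{ineq:diff_final_app} that $n\le 4$, taking care to discard the two parametric families $(9,9,n+1,n,n)$ and $(1,1,n+2,n,n)$ which survive the logarithmic inequality but fail to solve \eqref{eq:second_diff}, and finally use $m\le l\le 9$.

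The last step is a finite exhaustive verification: for every admissible $(b,n,l,m,d_1,d_2)$ in the reduced ranges I would test directly which satisfy an equation in \eqref{eq:second_diff}, tally the solutions with $n\ge 1$ per $b$ (the counts $68,29,16,4,4,7,7,5,9,0$ for $b\neq 10$ together with $11$ for $b=10$, summing to $160$), and read off the displayed identities for $b=2$. I expect the principal difficulty to be organizational rather than conceptual: correctly handling the $\varepsilon<0$ branches of the reduction lemma, treating the degenerate rational case $b=10$ by hand, and ensuring that families satisfying the approximation inequality but not the original Diophantine equation are rejected, so that the final count of $160$ is exact.
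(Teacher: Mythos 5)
Your proposal is correct and follows the paper's own proof essentially step for step: the same two-stage reduction on $\Gamma_3$ (with $\tau=\log b/\log g$, $w=l-m-2$, $M=1.432\cdot 10^{32}$, giving $l-m\le 39$) and on $\Gamma_4$ (with $\tau=\log 10/\log b$, $w=n-1$, giving $n\le 127$), the same use of part~b) of Lemma~\ref{lemma:reduction} for the $\varepsilon<0$ branches, the same direct handling of the rational case $b=10$ (bounding $l-m\le 5$, $n\le 4$, discarding the spurious families $(9,9,n+1,n,n)$ and $(1,1,n+2,n,n)$, and using $m\le l\le 9$), and the same final exhaustive count summing to $160$. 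No gaps to report.
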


\section*{Acknowledgements}
The authors express appreciation to the referee for providing valuable feedback that enhanced the quality of this paper. The first  author is supported by IMSP, Institut de Math\'ematiques et de Sciences Physiques de l'Universit\'e d'Abomey Calavi. The second and third authors were partially supported by the project IP-UNIST-44 (ITPEM), which is funded by the NextGenerationEU Foundation.

\end{document}